\documentclass[11pt,reqno]{amsart}

\usepackage{amsmath, amsthm, amssymb}
\usepackage[margin=3cm]{geometry}
\usepackage{mathpazo}
\usepackage{tikz}
\usepackage{caption}
\usepackage{enumitem}
\usepackage{setspace}
\usepackage{aliascnt}
\usepackage{hyperref}
\usepackage{cleveref}

\newtheorem{thm}{Theorem}[section]
\newaliascnt{prop}{thm}
\newtheorem{prop}[prop]{Proposition}
\aliascntresetthe{prop}
\newaliascnt{lem}{thm}
\newtheorem{lem}[lem]{Lemma}
\aliascntresetthe{lem}
\newaliascnt{cor}{thm}
\newtheorem{cor}[cor]{Corollary}
\aliascntresetthe{cor}
\newaliascnt{conj}{thm}
\newtheorem{conj}[conj]{Conjecture}
\aliascntresetthe{conj}
\newaliascnt{ques}{thm}
\newtheorem{ques}[ques]{Question}
\aliascntresetthe{ques}
\theoremstyle{definition}
\newaliascnt{defn}{thm}
\newtheorem{defn}[defn]{Definition}
\aliascntresetthe{defn}
\newaliascnt{exam}{thm}
\newtheorem{exam}[exam]{Example}
\aliascntresetthe{exam}
\newaliascnt{rmk}{thm}
\newtheorem{rmk}[rmk]{Remark}
\aliascntresetthe{rmk}

\crefname{thm}{Theorem}{Theorems}
\Crefname{thm}{Theorem}{Theorems}
\crefname{prop}{Proposition}{Propositions}
\Crefname{prop}{Proposition}{Propositions}
\crefname{lem}{Lemma}{Lemmas}
\Crefname{lem}{Lemma}{Lemmas}
\crefname{cor}{Corollary}{Corollaries}
\Crefname{cor}{Corollary}{Corollaries}
\crefname{conj}{Conjecture}{Conjectures}
\Crefname{conj}{Conjecture}{Conjectures}
\crefname{ques}{Question}{Questions}
\Crefname{ques}{Question}{Questions}
\crefname{defn}{Definition}{Definitions}
\Crefname{defn}{Definition}{Definitions}
\crefname{exam}{Example}{Examples}
\Crefname{exam}{Example}{Examples}
\crefname{rmk}{Remark}{Remarks}
\Crefname{rmk}{Remark}{Remarks}
\crefname{section}{Section}{Sections}
\Crefname{section}{Section}{Sections}
\crefname{figure}{Figure}{Figures}
\Crefname{figure}{Figure}{Figures}
\crefname{table}{Table}{Tables}
\Crefname{table}{Table}{Tables}

\numberwithin{equation}{section}

\newcommand{\NN}{\mathbb{N}}
\newcommand{\ZZ}{\mathbb{Z}}
\newcommand{\xx}{\mathbf{x}}
\newcommand{\yy}{\mathbf{y}}
\newcommand{\tb}{\mathbf{t}}
\newcommand{\SSch}{\mathfrak S}
\newcommand{\id}{\mathrm{id}}
\newcommand{\length}{\ell}
\newcommand{\abs}[1]{\lvert #1\rvert}
\newcommand{\cmpl}[1]{\langle #1\rangle}
\newcommand{\qand}{\quad\mbox{and}\quad}

\title[Coefficient lifting for skew divided differences]
{Coefficient lifting for skew divided differences\\
and triple Schubert coefficients}
\author{Byung-Hak Hwang}
\address{Center for AI and Natural Sciences, Korea Institute for Advanced Study, Seoul}
\email{byunghakhwang@gmail.com}
\subjclass[2020]{Primary 05E14; Secondary 14N15, 20F55}
\keywords{Schubert polynomial, skew divided difference, Schubert structure constant, triple Schubert coefficient}

\hypersetup{hidelinks,pdftitle={Coefficient lifting for skew divided differences and triple Schubert coefficients},pdfauthor={Byung-Hak Hwang},pdfsubject={Primary 05E14; Secondary 14N15, 20F55},pdfkeywords={Schubert polynomial, skew divided difference, Schubert structure constant, triple Schubert coefficient}}

\begin{document}


\begin{abstract}
Skew divided difference operators \(\partial_{w/v}\), introduced by
Macdonald, play an important role in the study of Schubert polynomials
and their structure constants.  For permutations \(u,v,w\in S_n\), we show that
the coefficient of each monomial \(\xx^\alpha\) in
\(\partial_{w/v}\SSch_u(\xx;\yy)\), where \(\SSch_u(\xx;\yy)\) is the
double Schubert polynomial, is a single triple Schubert coefficient
evaluated at \(\tb=0\), whose indices form an explicit lift of
\((u,v,w)\) to \(S_{n+\abs{\alpha}}\).  As an application, we show that
every coefficient of \(\partial_{w/v}\SSch_u(\xx)\) is a single Schubert
structure constant.  This gives a new proof of Kirillov's conjecture
that these coefficients are nonnegative.
\end{abstract}

\maketitle


\section{Introduction}\label{sec:intro}

Schubert polynomials \( \SSch_w(\xx) \) are polynomial representatives
of Schubert classes in the cohomology of a flag variety, and play an
important role in the algebraic or combinatorial study of Schubert
calculus.  They are indexed by permutations and form a basis for the
polynomial ring \( \ZZ[\xx] \), where \( \xx = (x_1,x_2,\dots) \).  The
structure constants \( c^w_{u,v} \), defined by
\[
  \SSch_u(\xx)\SSch_v(\xx)=\sum_w c^w_{u,v}\SSch_w(\xx),
\]
can be interpreted as intersection numbers, so are manifestly
nonnegative~\cite{Macdonald1991}.

Throughout, permutations are composed as functions and act on
polynomials by permuting the variables
\(x_1,x_2,\dots\), and they fix all other variables.  For \(i\ge1\), the
\emph{divided difference} \(\partial_i\) is defined by
\begin{equation}\label{eq:dd}
\partial_if=\frac{f-s_if}{x_i-x_{i+1}},
\end{equation}
where the simple transposition \(s_i\) interchanges \(x_i\) and
\(x_{i+1}\).  Denote by \(\le\) the Bruhat order, and let \(\length(w)\)
be the length of a permutation \(w\).
For a reduced word \(w=s_{i_1}\cdots s_{i_{\length(w)}}\), the operator
\(\partial_w=\partial_{i_1}\cdots\partial_{i_{\length(w)}}\) does not depend on
the reduced word.
The skew divided difference operators were introduced by
Macdonald~\cite[Chapter~II, (2.17)]{Macdonald1991}, and
Liu~\cite[Theorem~2.4]{Liu2015} found positive expressions for them.
They are characterized by the generalized Leibniz rule
\begin{equation}\label{eq:intro-leibniz}
\partial_w(fg)=\sum_{v\le w}\bigl(\partial_{w/v}f\bigr)\bigl(\partial_vg\bigr),
\end{equation}
for all polynomials \(f\) and \(g\) (\cref{lem:leibniz}), and we set
\(\partial_{w/v}=0\) unless \(v\le w\).

Kirillov suggested the following conjecture.

\begin{conj}[{\cite[Conjecture~1]{Kirillov2007}}]\label{conj:kirillov}
For permutations \(u,v,w\), each coefficient of the polynomial
\(\partial_{w/v}\SSch_u(\xx)\) is nonnegative, that is,
\( \partial_{w/v}\SSch_u(\xx) \in \NN[\xx] \).
\end{conj}

Two earlier results on special cases of \cref{conj:kirillov} give more
than positivity.  Since \(\partial_{w/v}\) is an operator of degree
\(\length(v)-\length(w)\) and \(\SSch_u(\xx)\) is homogeneous of degree
\(\length(u)\), the polynomial
\(\partial_{w/v}\SSch_u(\xx)\) is homogeneous of degree
\(d=\length(u)+\length(v)-\length(w)\) or zero.  The first result
concerns the case \(d=0\), that is, \(\length(w)=\length(u)+\length(v)\),
in which \(\partial_{w/v}\SSch_u(\xx)\) is a constant.  This constant
is the structure constant
\(c^w_{u,v}\)~\cite[Chapter~VII]{Macdonald1991},
\cite[Proposition~3(v)]{Kirillov2007}, and this connection with the
structure constants is a main motivation for the study of skew divided
differences.

The second result concerns the case where \( v=\id \).
Taking \(g=1\) in \eqref{eq:intro-leibniz} gives
\(\partial_{w/\id}=\partial_w\), and \(\partial_w\SSch_u(\xx)\) is the
Schubert polynomial \(\SSch_{uw^{-1}}(\xx)\) or zero.
Schubert polynomials have nonnegative integer
coefficients~\cite{LS1982,Macdonald1991}, and a
combinatorial formula for these coefficients was given by Billey,
Jockusch, and Stanley~\cite{BJS1993}, with another proof
by Fomin and Stanley~\cite{FS1994}.
Moreover, Lenart, Robinson, and Sottile~\cite{LRS2006} showed that each
coefficient of a Schubert polynomial is a single structure constant.

These two results suggest that the coefficients of
\(\partial_{w/v}\SSch_u(\xx)\) have a meaning beyond their sign.  In
both cases, each coefficient is a single structure constant.  We prove
that this holds for arbitrary \(u\), \(v\), \(w\), with indices given by
an explicit construction (\cref{cor:top}).

We deduce this from a more general result on
\(\partial_{w/v}\SSch_u(\xx;\yy)\), where \(\SSch_u(\xx;\yy)\) is the
double Schubert polynomial in \(\xx\) and a second alphabet
\(\yy=(y_1,y_2,\dots)\).
Note that \(\SSch_u(\xx;0)=\SSch_u(\xx)\) (see \cref{sec:schubert}).
For a third alphabet \(\tb=(t_1,t_2,\dots)\), the \emph{triple Schubert
coefficients} \(c^w_{u,v}(\yy,\tb)\) are the coefficients in the expansion
\begin{equation}\label{eq:intro-triple}
\SSch_u(\xx;\yy)\SSch_v(\xx;\tb)=\sum_{w}c^{w}_{u,v}(\yy,\tb)\SSch_w(\xx;\tb).
\end{equation}
In the Grassmannian case, these coefficients go back to Molev and
Sagan~\cite{MS1999} and Knutson and Tao~\cite{KT2003}.  In general,
they were studied by Samuel~\cite{Samuel2024} in another convention,
and we use the convention of Gao and Xiong~\cite{GX2025}.  In the case \(\yy=\tb=0\), they
become the ordinary structure constants \(c^w_{u,v}(0,0)=c^w_{u,v}\).

For a weak composition \(\alpha=(\alpha_1,\dots,\alpha_n)\in\NN^n\), we write
\(\abs{\alpha}=\alpha_1+\cdots+\alpha_n\) and
\(\xx^\alpha=x_1^{\alpha_1}\cdots x_n^{\alpha_n}\).  For \(u,v,w\) in
the symmetric group \(S_n\) and \(\alpha\in\NN^n\), we construct in
\cref{sec:lifting} an explicit triple
\((\widetilde u_\alpha,\widetilde v_\alpha,\widetilde w_\alpha)\) of
permutations in \(S_{n+\abs{\alpha}}\), which we call the \emph{block
lift} of \((u,v,w,\alpha)\) (\cref{def:lifts}).  Roughly speaking,
\(\widetilde u_\alpha\) is \(u\) with \(\abs{\alpha}\) fixed points
appended, and \(\widetilde v_\alpha\) and \(\widetilde w_\alpha\) are
obtained from \(v\) and \(w\) by assigning to each value \(i\) a block
of \(\alpha_i+1\) consecutive values and replacing \(i\) by the
smallest element of this block in \(v\) and by the largest in \(w\).  Our main theorem is the
following.

\begin{thm}\label{thm:main}
Let \(u,v,w\in S_n\), and let
\(d=\length(u)+\length(v)-\length(w)\).  For
\(\alpha\in\NN^n\), let
\((\widetilde u_\alpha,\widetilde v_\alpha,\widetilde w_\alpha)\) be the
block lift of \((u,v,w,\alpha)\).  Then
\begin{equation}\label{eq:main}
\partial_{w/v}\SSch_u(\xx;\yy)
=\sum_{\substack{\alpha\in\NN^n\\ \abs{\alpha}\le d}}
c^{\widetilde w_\alpha}_{\widetilde u_\alpha,\widetilde v_\alpha}(\yy,0)\,\xx^\alpha.
\end{equation}
\end{thm}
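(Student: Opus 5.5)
We will identify both sides coefficientwise, by extracting the coefficient of \(\xx^\alpha\) from \(\partial_{w/v}\SSch_u(\xx;\yy)\) through divided differences and then recognizing the result as a triple Schubert coefficient. The starting point is the classical characterization of triple Schubert coefficients at \(\tb=0\): expanding \(\SSch_{\widetilde u}(\xx;\yy)\SSch_{\widetilde v}(\xx)=\sum c^{\widetilde w}_{\widetilde u,\widetilde v}(\yy,0)\SSch_{\widetilde w}(\xx)\) and applying \(\partial_{\widetilde w}\) followed by evaluation at \(\xx=0\) isolates one coefficient. This uses \(\partial_{\widetilde w}\SSch_{z}(\xx)|_{\xx=0}=\delta_{z,\widetilde w}\), which follows from \(\partial_{\widetilde w}\SSch_z=\SSch_{z\widetilde w^{-1}}\) or \(0\) and \(\SSch_{x}(0)=\delta_{x,\id}\). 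Hence
\[
c^{\widetilde w}_{\widetilde u,\widetilde v}(\yy,0)=\Bigl(\partial_{\widetilde w}\bigl(\SSch_{\widetilde u}(\xx;\yy)\SSch_{\widetilde v}(\xx)\bigr)\Bigr)\Big|_{\xx=0}.
\]
Applying the Leibniz rule \eqref{eq:intro-leibniz} gives \(\sum_{z\le\widetilde w}(\partial_{\widetilde w/z}\SSch_{\widetilde u}(\xx;\yy))|_{0}\,\SSch_{\widetilde v z^{-1}}(0)\). The surviving term is \(z=\widetilde v\), so
\[
c^{\widetilde w}_{\widetilde u,\widetilde v}(\yy,0)=\bigl(\partial_{\widetilde w/\widetilde v}\SSch_{\widetilde u}(\xx;\yy)\bigr)\big|_{\xx=0}.
\]
It therefore suffices to prove the lifting identity: the coefficient of \(\xx^\alpha\) in \(\partial_{w/v}F\) equals the constant term of \(\partial_{\widetilde w_\alpha/\widetilde v_\alpha}\widetilde F\), where \(F=\SSch_u(\xx;\yy)\) and \(\widetilde F=\SSch_{\widetilde u_\alpha}(\xx;\yy)\). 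The stability of double Schubert polynomials makes \(\widetilde F\) equal to \(F\), viewed as a polynomial in more variables.

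To prove the lifting identity, we would first handle a single block. The coefficient of \(x_i^{a}\) in a polynomial \(g\), in the appropriate sense, is extracted as the constant term of \(\partial_{i}\partial_{i+1}\cdots\partial_{i+a-1}\) applied to \(g\) after the variables are shifted. More precisely, for a function of \(x_1,\dots,x_n\) we embed the variables into \(n+\abs\alpha\) variables, sending \(x_i\) to the first variable of the \(i\)th block. Then \(\partial_{c}\) for the cycle \(c=s_{b}s_{b+1}\cdots s_{b+a-1}\) sends \(x_b^{m}\) to \(h_{m-a}(x_b,\dots,x_{b+a})\), whose constant term is \(\delta_{m,a}\). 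Composing over all blocks produces a permutation \(\gamma_\alpha\), a product of commuting cycles, with the property that \((\partial_{\gamma_\alpha}G)|_{0}\) is the coefficient of \(\xx^\alpha\) in \(G\) for \(G\) supported on the block-initial variables.

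Next we would show the operator identity
\[
\partial_{\widetilde w_\alpha/\widetilde v_\alpha}\circ\iota=\partial_{\gamma_\alpha}\circ\iota'\circ\partial_{w/v}
\]
modulo terms with vanishing constant term. Here \(\iota\) and \(\iota'\) are the relevant variable embeddings: \(\widetilde F\) lives in the original variables, which must be matched to block variables. We would establish this through Macdonald's definition of skew divided differences via the nil-Coxeter expansion \(\partial_w(fg)\), that is, from the characterization \eqref{eq:intro-leibniz}. We would write \(\widetilde w_\alpha=\gamma_\alpha\cdot\hat w\) and \(\widetilde v_\alpha=\hat v\) with length additivity, where \(\hat w,\hat v\) are the \(w,v\) inflated so that value \(i\) goes to the bottom of its block. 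We would then apply Leibniz iteratively to compare \(\partial_{\widetilde w}(fg)\) with \(\partial_{\gamma}\partial_{\hat w}(fg)\). The degree bound \(\abs\alpha\le d\) is automatic from homogeneity.

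The main obstacle is this compatibility of skew divided differences with inflation of permutations and variable relabeling. One must check that inflating \(v\) and \(w\) by placing \(i\) at the smallest and largest entries of its block gives exactly \(\partial_{\widetilde w/\widetilde v}=\sum \partial_{\gamma/\cdot}\cdots\), with only one term surviving at \(\xx=0\). We would attack it by induction on \(\abs\alpha\), adding one box to one block at a time. Each step amounts to a single relation \(\partial_{ws_j/v}\) versus \(\partial_j\partial_{w/v}\), taken from the recursive Leibniz formula \(\partial_{s_jw/v}=\partial_j\partial_{w/v}\) or its twisted version, together with the fact that \(u\)'s lift fixes the new letters and so \(\widetilde F\) is symmetric in them.
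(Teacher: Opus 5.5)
Your outer reduction is correct and is the paper's: the identity \(c^{\widetilde w}_{\widetilde u,\widetilde v}(\yy,0)=\bigl(\partial_{\widetilde w/\widetilde v}\SSch_{\widetilde u}(\xx;\yy)\bigr)\big|_{\xx=0}\) (which is \cref{prop:skew-triple} at \(\xx=0\)), stability, extraction of \([\xx^\alpha]\) by Newton chains after moving \(x_i\) to \(x_{b_i}\), and the splitting \(\widetilde w_\alpha=\gamma_\alpha\hat w\), \(\widetilde v_\alpha=\hat v\) with additive lengths (the paper's \(\widetilde w_\alpha=\rho_\alpha\widetilde w_\alpha^\circ\)).

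There is one slip. Operators act rightmost first, so \(\partial_b\partial_{b+1}\cdots\partial_{b+a-1}\) applies \(\partial_{b+a-1}\) first and kills \(x_b^m\) when \(a\ge2\). You need \(\partial_{b+a-1}\cdots\partial_{b+1}\partial_b\), that is, the cycle \(s_{b+a-1}\cdots s_b\). This is also the cycle that sends \(b\) to the top \(b+a\), as your decomposition of \(\widetilde w_\alpha\) requires; your \(s_b\cdots s_{b+a-1}\) sends \(b\) to \(b+1\) when \(a\ge2\).

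The genuine gap is the step you call the main obstacle: the identity relating \(\partial_{\widetilde w_\alpha/\widetilde v_\alpha}\) to \(\partial_{w/v}\). This is the heart of the proof, and it is not established.
\begin{itemize}
\item The induction you sketch does not reduce to the single relation you state. Adding a box to \(\alpha\) enlarges \(N\) and shifts every base value above that block, so both endpoints change, and neither is obtained from the old one by one simple reflection.
\item For \(\length(s_jw)>\length(w)\), the relation \(\partial_{s_jw/v}=\partial_j\partial_{w/v}\) holds when \(s_j\) is a left ascent of \(v\). When \(s_j\) is a left descent of \(v\), there is in general an extra term \(s_j\partial_{w/s_jv}\). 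You never identify which case occurs, or why.
\item You do not explain where the relabeling \(\iota'\) comes from, or why the discarded terms would vanish at \(\xx=0\). Symmetry of \(\SSch_u\) in the new variables is not the mechanism.
\end{itemize}

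What is missing is the factorization rule \cref{lem:factor}. If \(\length(xy)=\length(x)+\length(y)\), then \(\partial_{xy/z}=\sum\partial_{x/z_1}\partial_{y/z_2}\), summed over \(z_1z_2=z\) with \(\length(z_1)+\length(z_2)=\length(z)\), and only terms with \(z_1\le x\), \(z_2\le y\) contribute. It follows from the uniqueness in the Leibniz rule by expanding \(\partial_x\partial_y(fg)\) twice. You then apply it twice.

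First, apply it to \(\hat w=\sigma_\alpha w\) with lower endpoint \(\hat v=\sigma_\alpha v\), where \(\sigma_\alpha=\cmpl{b_1,\dots,b_n}\) is increasing on \([n]\). A contributing term has \(z_2\le w\), so \(z_2\in S_n\). Hence \(z_1\in\sigma_\alpha S_n\) with \(\length(z_1)\le\length(\sigma_\alpha)\). This forces \(z_1=\sigma_\alpha\), because \(\sigma_\alpha\) is the unique minimal element of its coset. Thus \(\partial_{\hat w/\hat v}=\partial_{\sigma_\alpha/\sigma_\alpha}\partial_{w/v}=\sigma_\alpha\partial_{w/v}\), so the relabeling is produced by the skew operator itself.

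Second, apply it to \(\widetilde w_\alpha=\rho_\alpha\hat w\). Every letter of \(\rho_\alpha\) is \(s_{k-1}\) with \(k\) a non-base value. Since the first \(n\) entries of \(\widetilde v_\alpha\) are base values, each such letter is a left ascent of \(\widetilde v_\alpha\). On the other hand, for a nontrivial \(z_1\le\rho_\alpha\), the first letter of a reduced subword for \(z_1\) would have to be a left descent of \(\widetilde v_\alpha\), by length additivity. So only \(z_1=\id\) survives, and
\[
\partial_{\widetilde w_\alpha/\widetilde v_\alpha}=\partial_{\rho_\alpha}\sigma_\alpha\partial_{w/v}
\]
holds exactly as operators. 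Nothing is discarded, and no property of \(\SSch_u\) beyond stability is used.
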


In other words, the coefficient of \(\xx^\alpha\) in
\(\partial_{w/v}\SSch_u(\xx;\yy)\) is a single triple Schubert
coefficient in \(S_{n+\abs{\alpha}}\), evaluated at \(\tb=0\).  By an
identity of Samuel~\cite{Samuel2024}, we have
\(\partial_{w/v}\SSch_u(\xx;\yy)=c^w_{u,v}(\yy,\xx)\)
(\cref{prop:skew-triple}).  Thus \cref{thm:main} also says that the
coefficient of \(\tb^\alpha\) in \(c^w_{u,v}(\yy,\tb)\) is
\(c^{\widetilde w_\alpha}_{\widetilde u_\alpha,\widetilde v_\alpha}(\yy,0)\),
where \(\tb^\alpha=t_1^{\alpha_1}\cdots t_n^{\alpha_n}\).

Setting \(\yy=0\) in \cref{thm:main}, we obtain \cref{conj:kirillov}
with an explicit structure constant for each coefficient
(\cref{cor:top}).  Indeed, the coefficient of \(\xx^\alpha\) in
\(\partial_{w/v}\SSch_u(\xx;\yy)\) is homogeneous of degree
\(d-\abs{\alpha}\) in \(\yy\) or zero, so that only the terms with
\(\abs{\alpha}=d\) remain at \(\yy=0\).  For these terms the
coefficient in \eqref{eq:main} is the ordinary structure constant
\(c^{\widetilde w_\alpha}_{\widetilde u_\alpha,\widetilde v_\alpha}\), and hence
\(\partial_{w/v}\SSch_u(\xx)=\sum_{\abs{\alpha}=d}c^{\widetilde w_\alpha}_{\widetilde u_\alpha,\widetilde v_\alpha}\,\xx^\alpha\)
has nonnegative integer coefficients.  If \(d=0\), then the only term
is \(\alpha=0\), whose block lift is \((u,v,w)\) itself, and we recover
the first result above (\cref{cor:zero}).  Since
\(\partial_w\SSch_u(\xx)\) is a Schubert polynomial or zero, the second
result amounts to the case \(v=w=\id\), where
\(\partial_{w/v}\SSch_u(\xx)=\SSch_u(\xx)\).  In this case
\cref{cor:vw} realizes each coefficient as a single structure constant,
as Lenart, Robinson, and Sottile did, and we compare the two
realizations in \cref{sec:lrs}.

We should note that \cref{conj:kirillov} has already been proved by
Gao and Xiong~\cite[Corollary~1.2]{GX2025}.  They proved a conjecture
of Samuel~\cite[Conjecture~1.1]{Samuel2024} that
\(c^w_{u,v}(\yy,\tb)\) is a polynomial in the differences \(t_i-y_j\)
with nonnegative integer coefficients~\cite[Theorem~1.1]{GX2025}, and
deduced Kirillov's conjecture from it.  Their proof relies on a refined
version of Graham's positivity theorem~\cite[Theorem~3.2]{Graham2001}.
In contrast, our proof of \cref{thm:main} uses only divided
differences, and \cref{cor:top} follows from it and the nonnegativity
of the structure constants.  It would be interesting to find a direct
connection between the geometric approach of Gao and Xiong and our
algebro-combinatorial one.

This paper is organized as follows.  In \cref{sec:conventions} we
recall the background and prove the identity
\(\partial_{w/v}\SSch_u(\xx;\yy)=c^w_{u,v}(\yy,\xx)\)
(\cref{prop:skew-triple}).  In \cref{sec:extraction} we introduce the
value blocks of \(\alpha\) and extract the coefficient of \(\xx^\alpha\)
in a polynomial by divided differences followed by evaluation at zero
(\cref{prop:extraction}).  In \cref{sec:lifting} we prove a
factorization rule for skew divided differences (\cref{lem:factor}),
define the block lift, and show that composing \(\partial_{w/v}\) with
the extraction operator of \cref{sec:extraction} gives
\(\partial_{\widetilde w_\alpha/\widetilde v_\alpha}\)
(\cref{thm:operator}).  In \cref{sec:main} we prove \cref{thm:main},
derive \cref{conj:kirillov} (\cref{cor:top}) and further consequences,
pose a question (\cref{ques:signed}), compare our realization of the
coefficients of Schubert polynomials with that of Lenart, Robinson, and
Sottile, and complete the running example.


\section{Conventions and the objects in the statement}
\label{sec:conventions}

None of the results in this section are new, but for completeness we
provide short proofs for several of them.  For background on Schubert
polynomials and on permutations, we refer to~\cite{Macdonald1991,BB2005}.

\subsection{Permutations}

For \(k\ge1\) we write \([k]=\{1,2,\dots,k\}\), and set \([0]=\varnothing\).
We write a permutation \(w\in S_N\) in one-line notation
\(w=w(1)w(2)\cdots w(N)\), and we call \(i\) a position and \(w(i)\)
the value at that position.
We denote by \(s_i\) the simple transposition that interchanges \(i\)
and \(i+1\).
As in the introduction, permutations are
composed as functions, that is,
\begin{equation}\label{eq:composition}
(vw)(i)=v(w(i)).
\end{equation}
Recall that if
\(w=s_{i_1}\cdots s_{i_{\length(w)}}\) is a reduced word, then
\begin{equation}\label{eq:bruhat}
v\le w\iff
\text{some subword \(s_{i_{j_1}}\cdots s_{i_{j_m}}\)
with \(j_1<\cdots<j_m\) is a reduced word for \(v\)}.
\end{equation}
We say that \(s_i\) is a \emph{left ascent} of \(w\) if
\(\length(s_iw)=\length(w)+1\), and a \emph{left descent} of \(w\) if
\(\length(s_iw)=\length(w)-1\).  Exactly one of the two holds, and
\begin{equation}\label{eq:ascent}
s_i\text{ is a left ascent of }w
\iff
w^{-1}(i)<w^{-1}(i+1).
\end{equation}

For \(n\le N\), we regard \(S_n\) as the subgroup of \(S_N\) that fixes
\(n+1,\dots,N\), and we write \(w\times1_{N-n}\) for \(w\in S_n\)
regarded as an element of \(S_N\).
Let \(\sigma\in S_N\) be increasing on \([n]\), that is,
\(\sigma(1)<\sigma(2)<\cdots<\sigma(n)\).  Then
\(\length(\sigma s_i)>\length(\sigma)\) for every \(i<n\), and by
\cite[Proposition~2.4.4]{BB2005}, we have
\begin{equation}\label{eq:prefix-length}
\length(\sigma w)=\length(\sigma)+\length(w)
\qquad(w\in S_n).
\end{equation}
In particular, \(\sigma\) is the unique element of smallest length in
the coset \(\sigma S_n\).

\begin{defn}\label{def:completion}
Let \(n\le N\), and let \(m_1,\dots,m_n\) be distinct elements of
\([N]\).  The \emph{increasing completion}
\(\cmpl{m_1,\dots,m_n}\in S_N\) is the permutation whose one-line
notation begins with \(m_1,\dots,m_n\) and continues with the elements
of \([N]\setminus\{m_1,\dots,m_n\}\) in increasing order.
\end{defn}

For example, \(w\times1_{N-n}=\cmpl{w(1),\dots,w(n)}\) for
\(w\in S_n\).  In examples with \(n<N\), a vertical bar separates the first \(n\)
entries of the one-line notation from the others.
Since the last \(N-n\) entries of \(\cmpl{m_1,\dots,m_n}\)
increase, each of its inversions pairs a position \(i\le n\) either
with a position \(j\) such that \(i<j\le n\) and \(m_i>m_j\) or with the
position of an appended value smaller than \(m_i\), so that
\begin{equation}\label{eq:completion-length}
\length\bigl(\cmpl{m_1,\dots,m_n}\bigr)
=\#\{i<j\le n: m_i>m_j\}
+\sum_{i=1}^n\#\{j\in[N]\setminus\{m_1,\dots,m_n\}: j<m_i\}.
\end{equation}
For \(w\in S_n\), it is straightforward to check that
\begin{equation}\label{eq:completion-compose}
\cmpl{m_1,\dots,m_n}\,(w\times1_{N-n})=\cmpl{m_{w(1)},\dots,m_{w(n)}}.
\end{equation}
If \(2\le k\le N\) and \(k\notin\{m_1,\dots,m_n\}\), then
\begin{equation}\label{eq:completion-ascent}
s_{k-1}\text{ is a left ascent of }\cmpl{m_1,\dots,m_n},
\end{equation}
because the value \(k\) stands at a position after \(n\), and the value
\(k-1\) stands either at a position at most \(n\) or at an earlier
position among the increasing last \(N-n\) entries, so that
\eqref{eq:ascent} applies.

\begin{exam}\label{ex:data}
Throughout the paper, our running example is \(n=6\) and
\[
u=435126,\qquad v=142635,\qquad w=164325.
\]
We have \(\length(u)=7\), \(\length(v)=4\), and
\(\length(w)=7\), so that \(d=\length(u)+\length(v)-\length(w)=4\).
We fix the reduced word \(w=s_2s_3s_2s_5s_4s_3s_2\).  One can check
that the two underlined subwords in
\[
  s_2\underline{s_3}\underline{s_2}\underline{s_5}\underline{s_4}s_3s_2 \qand
  s_2\underline{s_3}s_2\underline{s_5}\underline{s_4}s_3\underline{s_2}
\]
are reduced words for \(v=s_3s_2s_5s_4=s_3s_5s_4s_2\), and that no
other subword is.  Thus \(v\le w\) by \eqref{eq:bruhat}.
Since the value \(3\) stands to the left of the value
\(2\) in \(w\), the transposition \(s_2\) is a left descent of \(w\) by
\eqref{eq:ascent}, and indeed the reduced word for \(w\) above begins
with \(s_2\).  In the notation of \cref{def:completion},
\(v\times1_3=\cmpl{1,4,2,6,3,5}=1\,4\,2\,6\,3\,5\mid7\,8\,9\) in
\(S_9\), and \eqref{eq:completion-length} gives
\(\length(v\times1_3)=4+0=\length(v)\).  By
\eqref{eq:completion-ascent}, \(s_6\), \(s_7\), and \(s_8\) are left
ascents of \(v\times1_3\), since \(7\), \(8\), and \(9\) are not among
its first six entries.
\end{exam}

\subsection{Polynomials and divided differences}\label{sec:dd}

Throughout, \(A\) denotes a commutative ring, in practice \(\ZZ\),
\(\ZZ[\yy]\), or \(\ZZ[\yy,\tb]\) for the alphabets \(\yy\) and \(\tb\),
and \(A[\xx]\) denotes the polynomial ring over
\(A\) in \(\xx=(x_1,x_2,\dots)\).  The group \(S_N\) acts on \(A[\xx]\)
by permuting the variables and fixing \(A\), that is,
\begin{equation}\label{eq:action}
(wf)(x_1,x_2,\dots)=f(x_{w(1)},x_{w(2)},\dots),
\end{equation}
where \(w(i)=i\) for \(i>N\).  By \eqref{eq:composition} this is a
left action, \((vw)f=v(wf)\).
The divided differences \(\partial_i\), introduced by Bernstein,
Gelfand, and Gelfand~\cite{BGG1973} and by
Demazure~\cite{Demazure1974}, are given by \eqref{eq:dd}, and
\(\partial_w\) is as in \cref{sec:intro}.  A product of operators acts
with its rightmost factor first.  A product \(\partial_{i_1}\cdots\partial_{i_m}\) is zero
if the word \(s_{i_1}\cdots s_{i_m}\) is not reduced,
and for \(v,w\in S_N\) we have \(\partial_v\partial_w=\partial_{vw}\)
if \(\length(vw)=\length(v)+\length(w)\) and
\(\partial_v\partial_w=0\)
otherwise~\cite[Chapter~II, (2.6) and (2.7)]{Macdonald1991}.  Each
\(\partial_i\) lowers degrees by one, and it is linear over the
subring of \(A[\xx]\) in which \(x_i\) and \(x_{i+1}\)
do not occur.  A polynomial in \(x_1,\dots,x_n\) is also regarded as a
polynomial in \(x_1,\dots,x_N\) for \(N>n\), in which
\(x_{n+1},\dots,x_N\) do not occur.

\subsection{Double Schubert polynomials}\label{sec:schubert}

Following Macdonald~\cite[Chapter~VI, (6.1) and (6.2)]{Macdonald1991},
we define the \emph{double Schubert polynomial} of \(w\in S_n\),
introduced by Lascoux~\cite{Lascoux1982}, by
\[
\SSch_w(\xx;\yy)=\partial_{w^{-1}w_0}\prod_{i+j\le n}(x_i-y_j),
\]
where \(w_0=n\,(n-1)\cdots1\) is the longest element of \(S_n\), and
the \emph{Schubert polynomial} of \(w\), introduced by Lascoux and
Sch\"utzenberger~\cite{LS1982}, by \(\SSch_w(\xx)=\SSch_w(\xx;0)\).  We collect the properties of
double Schubert polynomials that we use.
\begin{enumerate}[label=(S\arabic*),leftmargin=3em]
\item\label{S:hom}
(Homogeneity) For \(w\in S_n\), the polynomial
\(\SSch_w(\xx;\yy)\) is homogeneous of degree \(\length(w)\) in
\(x_1,\dots,x_{n-1},y_1,\dots,y_{n-1}\), and
\(\SSch_{\id}(\xx;\yy)=1\).
\item\label{S:stable}
(Stability) \(\SSch_{w\times1_m}(\xx;\yy)=\SSch_w(\xx;\yy)\) for
\(w\in S_n\) and \(m\ge0\).
Thus \(\SSch_w(\xx;\yy)\) is defined for \(w\) in
\(S_\infty=\bigcup_{n\ge1}S_n\).
\item\label{S:word}
(Word action) For \(v,w\in S_\infty\), we have
\[
  \partial_v\SSch_w(\xx;\yy) = \begin{cases}
    \SSch_{wv^{-1}}(\xx;\yy) & \mbox{if \( \length(wv^{-1})=\length(w)-\length(v) \),} \\ 
    0 & \mbox{otherwise.}
  \end{cases}
\]
\item\label{S:diag}
(Diagonal vanishing) For \(w\in S_\infty\), we have
\[
  \SSch_w(\yy;\yy) = \begin{cases}
    1 & \mbox{if \( w=\id \),} \\ 
    0 & \mbox{otherwise.}
  \end{cases}
\]
\item\label{S:basis}
(Basis) Every \(G\in A[\yy][\xx]\) has a unique expansion
\(G=\sum_w\kappa_w(\yy)\,\SSch_w(\xx;\yy)\) over
\(w\in S_\infty\) with finitely many nonzero coefficients
\(\kappa_w(\yy)\in A[\yy]\).
\end{enumerate}
These properties can be found in or follow directly
from~\cite[Chapter~VI]{Macdonald1991}.
By \ref{S:word} and \ref{S:diag}, the coefficients of the expansion in
\ref{S:basis} are given by
\begin{equation}\label{eq:functional}
\kappa_w(\yy)=\bigl(\partial_w G\bigr)\big|_{\xx=\yy}.
\end{equation}
The same holds with \(\yy\) replaced by the alphabet \(\tb\) or by
\(0\).

\subsection{Skew divided differences}\label{sec:skew}

In \cref{sec:intro}, the skew divided differences were determined by
the generalized Leibniz rule \eqref{eq:intro-leibniz}.  In this
subsection, we take a subword formula as their definition and prove
that the operators it defines satisfy \eqref{eq:intro-leibniz} and are
the only ones that do (\cref{lem:leibniz}).

\begin{defn}\label{def:skew}
Let \(v,w\in S_N\), and fix a reduced word
\(w=s_{i_1}\cdots s_{i_{\length(w)}}\).  The \emph{skew divided
difference} \(\partial_{w/v}\) is the operator on \(A[\xx]\) given
by
\begin{equation}\label{eq:subword}
\partial_{w/v}
=\sum_{J}\phi_1^{J}\phi_2^{J}\cdots\phi_{\length(w)}^{J},
\end{equation}
where the sum is over the subsets \(J\subseteq[\length(w)]\) such
that the letters \(s_{i_j}\) with \(j\in J\), in increasing order of
\(j\), form a reduced word for \(v\), and where
\[
  \phi_j^{J} = \begin{cases}
    s_{i_j} & \mbox{if \( j\in J \),} \\ 
    \partial_{i_j} & \mbox{otherwise.}
  \end{cases}
\]
We call \(v\) the
\emph{lower endpoint} and \(w\) the \emph{upper endpoint} of
\(\partial_{w/v}\).
\end{defn}

The following properties are immediate from \cref{def:skew}.
\begin{enumerate}[label=(D\arabic*),leftmargin=3em]
\item\label{it:skew-bruhat}
\(\partial_{w/v}=0\) unless \(v\le w\).
\item\label{it:skew-ends}
\(\partial_{w/\id}=\partial_w\), since only \(J=\varnothing\)
contributes, and \(\partial_{w/w}=w\), since only
\(J=[\length(w)]\) contributes.
\item\label{it:skew-degree}
\(\partial_{w/v}\) is \(A\)-linear, lowers degrees by
\(\length(w)-\length(v)\), and commutes with every ring
homomorphism applied to the coefficients, for example with setting
\(\yy=0\) when \(A=\ZZ[\yy]\).
\end{enumerate}
A reduced word for \(w\in S_N\) is also a reduced word for
\(w\times1_m\in S_{N+m}\), with the same contributing subsets \(J\),
so \eqref{eq:subword} does not depend on \(N\), and
\(\partial_{w/v}\) is defined for all \(v,w\in S_\infty\).
The following lemma is due to Macdonald, who states it in another
normalization (see \cref{rmk:macdonald}).  We include its short proof
in our conventions.

\begin{lem}[{\cite[Chapter~II, (2.17)]{Macdonald1991}}]\label{lem:leibniz}
For \(w\in S_N\) and \(f,g\in A[\xx]\), we have
\begin{equation}\label{eq:leibniz}
\partial_w(fg)
=\sum_{v\in S_N}\bigl(\partial_{w/v}f\bigr)\bigl(\partial_v g\bigr).
\end{equation}
Conversely, the operators \(\partial_{w/v}\) \((v\in S_N)\) are the
only operators on \(A[\xx]\) that satisfy \eqref{eq:leibniz} for all
\(f,g\in A[\xx]\).  In particular, \(\partial_{w/v}\) does not depend on the choice of the reduced
word for \(w\).
\end{lem}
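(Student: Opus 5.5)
We prove \eqref{eq:leibniz} by induction on \(\length(w)\), using the fixed reduced word \(w=s_{i_1}\cdots s_{i_{\length(w)}}\) from \cref{def:skew}. The starting point is the twisted Leibniz rule for a single divided difference, which follows directly from \eqref{eq:dd}:
\[
\partial_i(fg)=(\partial_if)\,g+(s_if)(\partial_ig).
\]
Write \(w=s_iw'\) with \(i=i_1\), where \(w'=s_{i_2}\cdots s_{i_{\length(w)}}\) is reduced. By induction,
\[
\partial_{w'}(fg)=\sum_{v'}\bigl(\partial_{w'/v'}f\bigr)\bigl(\partial_{v'}g\bigr).
\]
Applying \(\partial_i\) and the twisted rule to each term gives
\[
\partial_w(fg)=\sum_{v'}\Bigl[\bigl(\partial_i\partial_{w'/v'}f\bigr)\bigl(\partial_{v'}g\bigr)+\bigl(s_i\partial_{w'/v'}f\bigr)\bigl(\partial_i\partial_{v'}g\bigr)\Bigr].
\]

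Next we regroup the sum according to the subsets \(J\) of \cref{def:skew}.
\begin{itemize}
\item The first term corresponds to the subsets \(J\) with \(1\notin J\). For these, \(\phi_1^J=\partial_{i}\) and the lower endpoint is \(v=v'\).
\item In the second term, \(\partial_i\partial_{v'}\) equals \(\partial_{s_iv'}\) when \(\length(s_iv')=\length(v')+1\), and vanishes otherwise. The surviving terms correspond exactly to the subsets \(J\ni1\) whose letters form a reduced word \(s_i\cdot(\text{reduced word of }v')\) for \(v=s_iv'\). For these, \(\phi_1^J=s_i\).
\end{itemize}
Expanding \(\partial_{w'/v'}\) by its subword sum and collecting the contributions for fixed \(v\), we obtain exactly \eqref{eq:subword} for \(\partial_{w/v}\). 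This proves \eqref{eq:leibniz}. The base case \(w=\id\) is trivial, since there \(\partial_{\id/\id}=\id\). The delicate point is the bookkeeping: we must check that the terms killed by the vanishing of \(\partial_i\partial_{v'}\) are precisely the non-reduced subwords, so that no spurious terms survive.

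For uniqueness, suppose operators \(D_v\) also satisfy \eqref{eq:leibniz} for all \(f,g\). Then
\[
\sum_v\bigl(D_vf-\partial_{w/v}f\bigr)\,\partial_vg=0\qquad\text{for all }g.
\]
We show each coefficient vanishes, by downward induction in Bruhat order or by induction on \(\length(u)\), testing against \(g=\SSch_u(\xx)\) for \(u\in S_N\). By \ref{S:word}, \(\partial_v\SSch_u\) is \(\SSch_{uv^{-1}}\) or \(0\). Evaluating at \(\xx=0\) and using \ref{S:diag} with \(\yy=0\), \(\SSch_{uv^{-1}}(0)\) is \(1\) if \(v=u\) and \(0\) otherwise.

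Because the coefficients \(D_vf-\partial_{w/v}f\) are themselves polynomials, we cannot simply evaluate the whole sum at \(0\). Instead we use linear independence. The elements \(\partial_vg\) for varying \(g\) separate the index \(v\): the map \(g\mapsto(\partial_vg)_{v\in S_N}\) has triangular image when \(g=\SSch_u\), since \(\partial_v\SSch_u=0\) unless \(\length(u)\ge\length(v)\), with equality only for \(v=u\).

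We induct on \(\length(u)\). Assume \(D_v=\partial_{w/v}\) for all \(\length(v)<\length(u)\). Plugging in \(g=\SSch_u\) leaves
\[
\sum_{\length(v)\ge\length(u)}\bigl(D_vf-\partial_{w/v}f\bigr)\,\partial_v\SSch_u=0.
\]
Only \(v=u\) has \(\partial_v\SSch_u\ne0\) in this range, and there \(\partial_u\SSch_u=1\). Hence \(D_u=\partial_{w/u}\).

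Independence of the reduced word then follows: the left side of \eqref{eq:leibniz} does not involve the reduced word, and the operators \(\partial_{w/v}\) are uniquely determined by it.
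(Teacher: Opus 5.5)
Your proposal is correct and follows essentially the same route as the paper. Your induction on \(\length(w)\), peeling off \(s_{i_1}\), is the paper's letter-by-letter expansion via \(\partial_i(fg)=(\partial_if)\,g+(s_if)(\partial_ig)\) organized recursively, with non-reduced subwords killed by \(\partial_i\partial_{v'}=0\); your induction on \(\length(u)\) with test polynomials \(g=\SSch_u(\xx)\) is the paper's minimal-length counterexample argument (the side remarks about downward Bruhat induction and evaluating at \(\xx=0\) are unnecessary).
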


\begin{proof}
We first prove \eqref{eq:leibniz}.  By \eqref{eq:dd}, we have
\(\partial_i(fg)=(\partial_if)\,g+(s_if)\,(\partial_ig)\).  We apply
this rule for the letters \(s_{i_{\length(w)}},\dots,s_{i_1}\) of
the reduced word in turn, starting from the right.  At the letter
\(s_{i_j}\), either \(\partial_{i_j}\) acts on the first factor, or
\(s_{i_j}\) acts on the first factor and \(\partial_{i_j}\) on the
second.  Recording the second choice by \(j\in J\), we obtain
\[
\partial_w(fg)
=\sum_{J\subseteq[\length(w)]}
\bigl(\phi^{J}_{1}\cdots\phi^{J}_{\length(w)}f\bigr)
\bigl(\partial_{i_{j_1}}\cdots\partial_{i_{j_m}}g\bigr),
\]
where \(J=\{j_1<\cdots<j_m\}\).  The second factor
is \(\partial_v g\) if \(s_{i_{j_1}}\cdots s_{i_{j_m}}\) is a reduced
word for some \(v\in S_N\), and it is zero otherwise.  Grouping the
terms according to \(v\), we get \eqref{eq:leibniz}.

It remains to prove the uniqueness.  Let \(T_v\) \((v\in S_N)\) be
operators on \(A[\xx]\) that satisfy \eqref{eq:leibniz} with \(T_v\) in
place of \(\partial_{w/v}\), put \(D_vf=T_vf-\partial_{w/v}f\), and fix
\(f\in A[\xx]\).  Subtracting \eqref{eq:leibniz} from
the rule satisfied by the operators \(T_v\) gives
\[
\sum_{v\in S_N}\bigl(D_vf\bigr)\bigl(\partial_v g\bigr)=0
\qquad\text{for all }g\in A[\xx].
\]
For a contradiction, suppose that \(D_vf\ne0\) for some \(v\), and
choose \(u\in S_N\) with \(D_uf\ne0\) and \(\length(u)\)
as small as possible.  We take \(g=\SSch_u(\xx)\).  Let
\(v\ne u\) with \(D_vf\ne0\).  Then
\(\length(v)\ge\length(u)\) by the choice of \(u\), so the
equality \(\length(uv^{-1})=\length(u)-\length(v)\) would
force \(uv^{-1}=\id\), which is impossible, and
\(\partial_v\SSch_u(\xx)=0\) by \ref{S:word} with \(\yy=0\).
Thus only the term with \(v=u\) remains, and since
\(\partial_u\SSch_u(\xx)=\SSch_{\id}(\xx)=1\) by \ref{S:word}
and \ref{S:hom}, the last display becomes \(D_uf=0\), a
contradiction.  It follows that \(T_v=\partial_{w/v}\) for every
\(v\).  Since the first part holds for every reduced word for \(w\),
the operators given by \eqref{eq:subword} for two reduced words both
satisfy \eqref{eq:leibniz} and therefore coincide, which completes the
proof.
\end{proof}

\begin{rmk}\label{rmk:macdonald}
  Our operators \( \partial_{w/v} \) are those of Gao and
  Xiong~\cite[Definition~2.6]{GX2025} and differ by a permutation from
  those of Macdonald~\cite[Chapter~II, (2.17)]{Macdonald1991} and
  Kirillov~\cite[Definition~4 and (5.1)]{Kirillov2007}, whose operator
  \(\partial^{\mathrm M}_{w/v}\) satisfies
  \[
  \partial_w(fg)=\sum_{v}v\bigl(\partial^{\mathrm M}_{w/v}f\bigr)\bigl(\partial_v g\bigr),
  \]
  so that
  \[
  \partial_{w/v}=v\circ\partial^{\mathrm M}_{w/v}.
  \]
  Since \(v\) only permutes the variables, the polynomials
  \(\partial_{w/v}f\) and \(\partial^{\mathrm M}_{w/v}f\) have
  the same coefficients up to a relabeling of the monomials, so
  \(\partial_{w/v}f\) has nonnegative coefficients if and only if
  \(\partial^{\mathrm M}_{w/v}f\) does.
\end{rmk}

\begin{exam}\label{ex:skew}
Take the same setting as in \cref{ex:data}.  By \cref{def:skew}, the
two underlined subwords there give
\begin{equation}\label{eq:ex-operator}
\partial_{w/v}
=\partial_2\,s_3s_2s_5s_4\,\partial_3\partial_2
+\partial_2\,s_3\,\partial_2\,s_5s_4\,\partial_3\,s_2.
\end{equation}
\end{exam}

\subsection{Triple Schubert coefficients}\label{sec:triple}

Applying \ref{S:basis} with the coefficient ring \(\ZZ[\yy]\) and
with \(\tb\) in place of \(\yy\), we obtain unique polynomials
\(c^w_{u,v}(\yy,\tb)\in\ZZ[\yy,\tb]\) for \(u,v,w\in S_\infty\) such
that \eqref{eq:intro-triple} holds with finitely many nonzero terms.
Since both sides of \eqref{eq:intro-triple} are homogeneous by
\ref{S:hom} and the expansion is unique, \(c^w_{u,v}(\yy,\tb)\) is
homogeneous of degree \(\length(u)+\length(v)-\length(w)\) or zero.
Setting \(\tb=0\) in \eqref{eq:intro-triple} expands
\(\SSch_u(\xx;\yy)\SSch_v(\xx)\) in Schubert polynomials with the
coefficients \(c^w_{u,v}(\yy,0)\), and setting \(\yy=0\) as well shows
that \(c^w_{u,v}(0,0)=c^w_{u,v}\) is the ordinary structure constant.
Both expansions are unique by \ref{S:basis} with
\(0\) in place of \(\yy\).

The following identity is due to
Samuel~\cite{Samuel2024}, who derives it in another
notation.  For completeness, we include its short proof in our
conventions.

\begin{prop}[{\cite{Samuel2024}}]\label{prop:skew-triple}
For \(u,v,w\in S_\infty\), we have
\begin{equation}\label{eq:skew-triple}
\partial_{w/v}\SSch_u(\xx;\yy)=c^{w}_{u,v}(\yy,\xx).
\end{equation}
Moreover, \(c^{w}_{u,v}(\yy,\tb)=0\) unless \(v\le w\), and if
\(u,v,w\in S_n\), then both sides of \eqref{eq:skew-triple} are
polynomials in \(x_1,\dots,x_n\) with coefficients in \(\ZZ[\yy]\).
\end{prop}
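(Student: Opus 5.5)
The plan is to compute \(c^w_{u,v}(\yy,\tb)\) with the coefficient extraction formula \eqref{eq:functional} and then compare it with the generalized Leibniz rule. Apply \ref{S:basis} in the alphabet \(\tb\) to \(G=\SSch_u(\xx;\yy)\SSch_v(\xx;\tb)\), with coefficient ring \(\ZZ[\yy,\tb]\). By \eqref{eq:functional} with \(\tb\) in place of \(\yy\),
\[
c^w_{u,v}(\yy,\tb)=\bigl(\partial_w\bigl(\SSch_u(\xx;\yy)\SSch_v(\xx;\tb)\bigr)\bigr)\big|_{\xx=\tb}.
\]
Note that \(\partial_w\) acts only on \(\xx\), so \(\yy\) and \(\tb\) are scalars for it. Expand the product using \cref{lem:leibniz} with \(f=\SSch_u(\xx;\yy)\) and \(g=\SSch_v(\xx;\tb)\), working in some \(S_N\) containing \(u,v,w\). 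This gives
\[
\sum_{v'}\bigl(\partial_{w/v'}\SSch_u(\xx;\yy)\bigr)\bigl(\partial_{v'}\SSch_v(\xx;\tb)\bigr).
\]

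Next evaluate each term at \(\xx=\tb\). By \ref{S:word}, the factor \(\partial_{v'}\SSch_v(\xx;\tb)\) is either \(\SSch_{vv'^{-1}}(\xx;\tb)\) or zero. By \ref{S:diag} (in the alphabet \(\tb\)), at \(\xx=\tb\) it equals \(1\) if \(v'=v\) and \(0\) otherwise. The key point is that \(\partial_{w/v'}\SSch_u(\xx;\yy)\) is a polynomial in \(\xx\) with no \(\tb\), so substituting \(\xx=\tb\) in the product is multiplicative. Hence \(c^w_{u,v}(\yy,\tb)=(\partial_{w/v}\SSch_u)(\tb;\yy)\), and renaming \(\tb\) as \(\xx\) gives \eqref{eq:skew-triple}.

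The remaining claims follow from this identity. Vanishing unless \(v\le w\) comes from \ref{it:skew-bruhat}. For \(u,v,w\in S_n\), \(\SSch_u(\xx;\yy)\) involves only \(x_1,\dots,x_{n-1}\) by \ref{S:hom}. A reduced word for \(w\) uses only \(s_1,\dots,s_{n-1}\), so each operator in \eqref{eq:subword} (each \(s_i\) or \(\partial_i\) with \(i<n\)) preserves \(\ZZ[\yy][x_1,\dots,x_n]\). Therefore the left side, and hence \(c^w_{u,v}(\yy,\xx)\), lies in this ring.

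The only delicate step is justifying the substitution: \(\partial_w\) must commute with the scalars \(\tb\) and \(\yy\) (\ref{it:skew-degree}), and evaluating at \(\xx=\tb\) must be a ring homomorphism applied after the operators. Since the order of operations is explicit here, I expect no real obstacle.
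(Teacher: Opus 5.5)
Your proposal is correct and follows the paper's proof essentially step for step: coefficient extraction via \eqref{eq:functional} in the alphabet \(\tb\), expansion by \cref{lem:leibniz}, elimination of every term except \(v'=v\) by \ref{S:word} and \ref{S:diag}, renaming \(\tb\) as \(\xx\), and then \ref{it:skew-bruhat} and \ref{S:hom} for the remaining claims. One small correction: evaluation at \(\xx=\tb\) is multiplicative simply because it is a ring homomorphism, and the fact that \(\partial_{w/v}\SSch_u(\xx;\yy)\) contains no \(\tb\) is instead what justifies the final renaming of \(\tb\) as \(\xx\).
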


\begin{proof}
By \eqref{eq:functional} in the alphabet \(\tb\), the coefficient of
\(\SSch_w(\xx;\tb)\) in \eqref{eq:intro-triple} is
\[
c^{w}_{u,v}(\yy,\tb)
=\Bigl(\partial_{w}\bigl(\SSch_u(\xx;\yy)\SSch_v(\xx;\tb)\bigr)\Bigr)\Big|_{\xx=\tb}.
\]
Expanding \(\partial_w\) of the product by \eqref{eq:leibniz} with
\(A=\ZZ[\yy,\tb]\) gives
\[
c^{w}_{u,v}(\yy,\tb)
=\sum_{z}\Bigl(\bigl(\partial_{w/z}\SSch_u(\xx;\yy)\bigr)\,
\bigl(\partial_{z}\SSch_v(\xx;\tb)\bigr)\Bigr)\Big|_{\xx=\tb}.
\]
By \ref{S:word} and \ref{S:diag}, the value of
\(\partial_{z}\SSch_v(\xx;\tb)\) at \(\xx=\tb\) is \(1\) if
\(z=v\) and \(0\) otherwise, so only the term with \(z=v\) remains,
and
\[
c^{w}_{u,v}(\yy,\tb)=\bigl(\partial_{w/v}\SSch_u(\xx;\yy)\bigr)\big|_{\xx=\tb}.
\]
Renaming \(\tb\) as \(\xx\) gives \eqref{eq:skew-triple}, and the
vanishing for \(v\not\le w\) follows from \ref{it:skew-bruhat}.
Finally, if \(u,v,w\in S_n\), then \(\partial_{w/v}\) involves only
\(s_i\) and \(\partial_i\) with \(i<n\), and \(\SSch_u(\xx;\yy)\)
involves only \(x_i\) with \(i<n\) by \ref{S:hom}, so
\(\partial_{w/v}\SSch_u(\xx;\yy)\) is a polynomial in
\(x_1,\dots,x_n\).
\end{proof}

\begin{rmk}\label{rmk:fgx}
Fan, Guo, and Xiong~\cite[Proposition~6.4]{FGX2025} proved the case
\(\yy=0\) of \eqref{eq:skew-triple} and used it to prove
\cref{conj:kirillov} when \(u\) and \(v\) have separated
descents~\cite[Corollary~6.6]{FGX2025}, that is, when \(i\le j\) for
all positions \(i\) and \(j\) with \(u(i)>u(i+1)\) and
\(v(j)>v(j+1)\).  This case also follows from an earlier formula of
Samuel~\cite[Theorem~3.1]{Samuel2024}, as Fan, Guo, and Xiong
note~\cite[Remark~2.4]{FGX2025}.
\end{rmk}

\begin{exam}\label{ex:F}
Take the same setting as in \cref{ex:skew}.  Since \(u\) is
dominant~\cite[Chapter~I, (1.30)]{Macdonald1991}, Macdonald's product
formula for dominant permutations~\cite[Chapter~VI, (6.14)]{Macdonald1991}
gives
\[
\SSch_u(\xx;\yy)
=(x_1-y_1)(x_1-y_2)(x_1-y_3)(x_2-y_1)(x_2-y_2)(x_3-y_1)(x_3-y_2).
\]
Since none of the operators in
\eqref{eq:ex-operator} involves \(s_1\) or \(\partial_1\), they
treat the first three factors, which involve only \(x_1\) and \(\yy\),
as constants.  Applying \eqref{eq:ex-operator} to the product of the
other four factors, we obtain
\begin{equation}\label{eq:ex-F}
F:=\partial_{w/v}\SSch_u(\xx;\yy)
=(x_1-y_1)(x_1-y_2)(x_1-y_3)\,(x_4+x_6-y_1-y_2),
\end{equation}
which is homogeneous of degree \(d=4\).  We write
\(e_1=y_1+y_2+y_3\), \(e_2=y_1y_2+y_1y_3+y_2y_3\), and
\(e_3=y_1y_2y_3\), so that
\(F=(x_1^3-e_1x_1^2+e_2x_1-e_3)(x_4+x_6-y_1-y_2)\).  The expansion of
\(F\) in monomials in \(\xx\) is then
\begin{equation}\label{eq:ex-coefficients}
\begin{split}
F&=x_1^3x_4+x_1^3x_6-(y_1+y_2)\,x_1^3
-e_1x_1^2x_4-e_1x_1^2x_6+e_1(y_1+y_2)\,x_1^2\\
&\quad+e_2x_1x_4+e_2x_1x_6-e_2(y_1+y_2)\,x_1
-e_3x_4-e_3x_6+e_3(y_1+y_2),
\end{split}
\end{equation}
with twelve nonzero coefficients.  Since \(F\) is homogeneous of degree
\(d\), its part of degree \(d\) in \(\xx\) is its value at \(\yy=0\),
which is \(\partial_{w/v}\SSch_u(\xx)=x_1^3x_4+x_1^3x_6\) by
\ref{it:skew-degree}.  Its coefficients are nonnegative, as
\cref{conj:kirillov} predicts.  By
\cref{prop:skew-triple}, we have
\begin{equation}\label{eq:ex-triple}
c^{164325}_{435126,\,142635}(\yy,\tb)
=(t_1-y_1)(t_1-y_2)(t_1-y_3)\,(t_4+t_6-y_1-y_2),
\end{equation}
and the coefficient of \(t_1^2t_4\) in this triple Schubert coefficient
is the coefficient \(-e_1=-y_1-y_2-y_3\) of \(x_1^2x_4\) in \(F\).
\end{exam}


\section{Extracting a coefficient with divided differences}
\label{sec:extraction}

Our goal in this section is to prove \cref{prop:extraction}, which
expresses the coefficient of \(\xx^\alpha\) in a polynomial in terms of
divided differences and an evaluation at zero.  The basic idea is to
rename the variables so that each \(x_i\) is followed by \(\alpha_i\)
variables that do not occur, and then to extract the coefficient of
\(x_i^{\alpha_i}\) by a chain of \(\alpha_i\) divided differences
(\cref{lem:newton}).

For \(f\in A[\xx]\) and \(\alpha\in\NN^n\), we denote
by \([\xx^\alpha]f\) the coefficient of
\(\xx^\alpha=x_1^{\alpha_1}\cdots x_n^{\alpha_n}\) in \(f\), regarded
as a polynomial in \(x_1,\dots,x_n\) whose coefficients are polynomials
in the remaining variables.

\subsection{One variable}

For \(i\le j\) and \(m\ge0\), we use \(h_m(x_i,\dots,x_j)\) to denote
the complete homogeneous symmetric polynomial of degree \(m\) in
\(x_i,\dots,x_j\), the sum of all monomials of degree \(m\) in these
variables.  We set \(h_0=1\) and define \(h_m\) to be zero for
\(m<0\).
For an indeterminate \(T\), these polynomials have the generating
function
\begin{equation}\label{eq:h-gf}
\sum_{m\ge0}h_m(x_i,\dots,x_{j})\,T^m
=\frac{1}{(1-x_iT)(1-x_{i+1}T)\cdots(1-x_{j}T)}.
\end{equation}

\begin{lem}\label{lem:newton}
Let \(i\ge1\) and \(k\ge0\), with the convention that
\(\partial_{i+k-1}\cdots\partial_{i+1}\partial_i\) is the identity
operator if \(k=0\).  Then the following hold.
\begin{enumerate}[label=(\roman*),leftmargin=2.2em]
\item\label{it:newton-chain}
For \(m\ge0\), we have
\begin{equation}\label{eq:newton}
\partial_{i+k-1}\cdots\partial_{i+1}\partial_i\bigl(x_i^{m}\bigr)
=h_{m-k}(x_i,x_{i+1},\dots,x_{i+k}).
\end{equation}
\item\label{it:newton-coeff}
If \(f\in A[\xx]\) is a polynomial in which \(x_{i+1},\dots,x_{i+k}\)
do not occur, then
\[
\bigl(\partial_{i+k-1}\cdots\partial_{i+1}\partial_if\bigr)
\big|_{x_i=x_{i+1}=\cdots=x_{i+k}=0}
\]
is the coefficient of \(x_i^k\) in \(f\), regarded as a polynomial in
\(x_i\).
\end{enumerate}
\end{lem}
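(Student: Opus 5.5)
Part \ref{it:newton-chain} goes by induction on \(k\); part \ref{it:newton-coeff} then follows by linearity.

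For \ref{it:newton-chain}, the case \(k=0\) is \(x_i^m=h_m(x_i)\). For the inductive step I would show the one-step identity
\[
\partial_{j}\,h_{r}(x_i,\dots,x_{j})=h_{r-1}(x_i,\dots,x_{j+1})\qquad(j\ge i,\ r\in\ZZ).
\]
Applying it with \(j=i+k\) and \(r=m-k\) to the output of the chain of length \(k\) gives the chain of length \(k+1\), since the new operator \(\partial_{i+k}\) is applied last (leftmost).

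To prove the one-step identity, write \(h_r(x_i,\dots,x_j)=\sum_{a\ge0}h_{r-a}(x_i,\dots,x_{j-1})\,x_j^a\). The factors \(h_{r-a}(x_i,\dots,x_{j-1})\) involve neither \(x_j\) nor \(x_{j+1}\), so \(\partial_j\) is linear over them. We also have \(\partial_j x_j^a=h_{a-1}(x_j,x_{j+1})\). This gives
\[
\partial_j h_r(x_i,\dots,x_j)=\sum_a h_{r-a}(x_i,\dots,x_{j-1})\,h_{a-1}(x_j,x_{j+1}),
\]
which equals \(h_{r-1}(x_i,\dots,x_{j+1})\) by multiplying generating functions in \eqref{eq:h-gf}. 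Equivalently, one can use the identity \(h_r(X,x_j)-h_r(X,x_{j+1})=(x_j-x_{j+1})h_{r-1}(X,x_j,x_{j+1})\), read off from \eqref{eq:h-gf}. Negative \(r\) is trivial, since both sides vanish.

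For \ref{it:newton-coeff}, write \(f=\sum_m g_m x_i^m\), where each \(g_m\) involves none of \(x_i,\dots,x_{i+k}\). Each operator \(\partial_{i},\dots,\partial_{i+k-1}\) involves only the variables \(x_i,\dots,x_{i+k}\), so the chain is linear over the \(g_m\). By \ref{it:newton-chain} the result is \(\sum_m g_m\,h_{m-k}(x_i,\dots,x_{i+k})\). Setting \(x_i=\cdots=x_{i+k}=0\) kills \(h_{m-k}\) unless \(m=k\), and \(h_0=1\), so what remains is \(g_k\).

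The only real content is the one-step identity, a routine generating-function check. The point requiring care is that the hypothesis on \(f\) is exactly what makes the \(g_m\) free of \(x_{i+1},\dots,x_{i+k}\), so that they commute with the chain.
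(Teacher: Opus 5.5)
Your proposal is correct and is essentially the paper's argument. The paper applies the chain directly to the generating function \(1/(1-x_iT)\), where each \(\partial_{i+j}\) in effect multiplies by \(T/(1-x_{i+j+1}T)\); your one-step identity \(\partial_j h_r(x_i,\dots,x_j)=h_{r-1}(x_i,\dots,x_{j+1})\) is the coefficientwise form of that step, and part (ii) is handled exactly as in the paper.
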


\begin{proof}
We prove \ref{it:newton-chain} by a generating function argument,
letting the divided differences act coefficientwise on power series in
\(T\).  For \(j\ge0\), the operator \(\partial_{i+j}\) is then linear
over the power series whose coefficients do not involve \(x_{i+j}\) and
\(x_{i+j+1}\), and \eqref{eq:dd} gives
\[
\partial_{i+j}\frac{1}{1-x_{i+j}T}
=\frac{1}{x_{i+j}-x_{i+j+1}}
\Bigl(\frac{1}{1-x_{i+j}T}-\frac{1}{1-x_{i+j+1}T}\Bigr)
=\frac{T}{(1-x_{i+j}T)(1-x_{i+j+1}T)}.
\]
Applying this identity successively for \(j=0,1,\dots,k-1\), we obtain
\begin{align*}
  \partial_{i+k-1}\cdots\partial_i \frac{1}{1-x_i T}
    &= \frac{T^k}{\prod_{j=0}^k(1-x_{i+j}T)}
     =\sum_{m\ge0}h_{m-k}(x_i,\dots,x_{i+k})T^m,
\end{align*}
where the operator is the identity if \(k=0\), and the last equality
is \eqref{eq:h-gf}.  Comparing the
coefficients of \(T^m\) on both sides proves \eqref{eq:newton}.

For \ref{it:newton-coeff}, write \(f=\sum_{m\ge0}f_m\,x_i^m\), where
the polynomials \(f_m\) do not involve \(x_i,\dots,x_{i+k}\).  Then
\eqref{eq:newton} gives
\[
\left(\partial_{i+k-1}\cdots\partial_if\right)
\big|_{x_i=\cdots=x_{i+k}=0}
=\sum_{m\ge0}f_m\,h_{m-k}(x_i,\dots,x_{i+k})
\big|_{x_i=\cdots=x_{i+k}=0}.
\]
At zero, \(h_{m-k}\) equals \(1\) if \(m=k\) and \(0\) otherwise.  Thus
the right side is \(f_k\), which is the coefficient of \(x_i^k\) in
\(f\), as desired.
\end{proof}

For \(i=1\), \cref{lem:newton}\ref{it:newton-chain} follows by
applying the first identity of~\cite[Chapter~III, (3.9)]{Macdonald1991}
\(k\) times, and \cref{lem:newton}\ref{it:newton-coeff} follows by
setting \(y_1=y_2=\cdots=0\) in Newton's interpolation formula, a
special case of~\cite[Chapter~VI, (6.8)]{Macdonald1991}.  We
remark that \cref{lem:newton}\ref{it:newton-coeff} puts no
condition on the variables other than \(x_{i+1},\dots,x_{i+k}\).  The
polynomial \(f\) may involve \(x_1,\dots,x_{i-1}\) and \(x_{i+k+1}\),
\(x_{i+k+2}\), \dots, and the coefficient of \(x_i^k\) is then a
polynomial in them.

\subsection{Several variables}

In this subsection, we fix \(\alpha\in\NN^n\) and put
\(r=\abs{\alpha}\) and \(N=n+r\).

\begin{defn}\label{def:blocks}
For \(i\in[n]\), the \emph{base value} \(b_i\) and the \emph{value block} \(B_i\) are
\begin{equation}\label{eq:blocks}
b_i=i+\alpha_1+\cdots+\alpha_{i-1} \qand
B_i=\{b_i,b_i+1,\dots,b_i+\alpha_i\}.
\end{equation}
The \emph{block placement} is
\begin{equation}\label{eq:sigma}
\sigma_\alpha=\cmpl{b_1,b_2,\dots,b_n}\in S_N,
\end{equation}
and the \emph{block cycle} \(\rho_i\) of \(i\in[n]\) and their product
\(\rho_\alpha\) are
\begin{equation}\label{eq:rho}
\rho_i=s_{b_i+\alpha_i-1}\cdots s_{b_i+1}s_{b_i}\in S_N
\qand
\rho_\alpha=\rho_1\rho_2\cdots\rho_n.
\end{equation}
Here, we set \( \rho_i=\id \) if \( \alpha_i=0 \).
\end{defn}

The base value \(b_i\) is the smallest element of \(B_i\), and
\(b_i+\alpha_i\) is the largest, which we call the \emph{top} of \(B_i\).
The main reason for introducing the block placement is the renaming
\eqref{eq:sigma-substitution} in \cref{lem:blocks} below.  It replaces
each \(x_i\) by \(x_{b_i}\), so that the \(\alpha_i\) variables
\(x_{b_i+1},\dots,x_{b_i+\alpha_i}\) do not occur, as
\cref{lem:newton}\ref{it:newton-coeff} requires for the variable
\(x_{b_i}\) and \(k=\alpha_i\).  The block cycle \(\rho_i\) is chosen so that
\(\partial_{\rho_i}\) is the chain
\(\partial_{b_i+\alpha_i-1}\cdots\partial_{b_i}\) of \cref{lem:newton}
(\cref{lem:blocks}\ref{it:reduced}).

\begin{exam}\label{ex:blocks}
Take the same setting as in \cref{ex:F}, and let
\(\alpha=(2,0,0,1,0,0)\), so that \(\xx^\alpha=x_1^2x_4\), \(r=3\),
and \(N=9\).  \Cref{def:blocks} gives the base values and
the value blocks in the table
\[
\begin{array}{c|cccccc}
i&1&2&3&4&5&6\\\hline
b_i&1&4&5&6&8&9\\
B_i&\{1,2,3\}&\{4\}&\{5\}&\{6,7\}&\{8\}&\{9\}
\end{array},
\]
and \cref{fig:blocks} shows the blocks as consecutive intervals of
\([9]\).  The block placement is
\[
\sigma_\alpha=\cmpl{1,4,5,6,8,9}=1\,4\,5\,6\,8\,9\mid2\,3\,7,
\]
and the block cycles are \(\rho_1=s_2s_1\), \(\rho_4=s_6\), and
\(\rho_i=\id\) for \(i\neq 1,4\), so that \(\rho_\alpha=s_2s_1s_6\).
By \eqref{eq:completion-length}, we have
\(\length(\sigma_\alpha)=0+(0+2+2+2+3+3)=12\), where the second sum
counts the non-base values \(2,3,7\) below each base value.
Applying \(s_6\), \(s_1\), and \(s_2\) to the values of
\(\sigma_\alpha\) in order, we obtain
\(\rho_\alpha\sigma_\alpha=3\,4\,5\,7\,8\,9\mid1\,2\,6=\cmpl{3,4,5,7,8,9}\),
which sends each \(i\in[6]\) to the top \(b_i+\alpha_i\) of its block.
Its length is
\(\length(\rho_\alpha\sigma_\alpha)=0+(2+2+2+3+3+3)=15=12+3\), again by
\eqref{eq:completion-length}.
\end{exam}

\begin{figure}[ht]
\centering
\begin{tikzpicture}[x=0.72cm,y=0.72cm,>=stealth]
  \fill[gray!20] (0,0) rectangle (3,1);
  \fill[gray!20] (5,0) rectangle (7,1);
  \foreach \pos in {1,...,9} { \draw (\pos-1,0) rectangle (\pos,1); }
  \node at (0.5,0.5) {$\mathbf 1$};
  \node at (1.5,0.5) {$2$};
  \node at (2.5,0.5) {$3$};
  \node at (3.5,0.5) {$\mathbf 4$};
  \node at (4.5,0.5) {$\mathbf 5$};
  \node at (5.5,0.5) {$\mathbf 6$};
  \node at (6.5,0.5) {$7$};
  \node at (7.5,0.5) {$\mathbf 8$};
  \node at (8.5,0.5) {$\mathbf 9$};
  \draw[very thick] (0,0) rectangle (3,1);
  \draw[very thick] (3,0) rectangle (4,1);
  \draw[very thick] (4,0) rectangle (5,1);
  \draw[very thick] (5,0) rectangle (7,1);
  \draw[very thick] (7,0) rectangle (8,1);
  \draw[very thick] (8,0) rectangle (9,1);
  \node at (1.5,-0.45) {\footnotesize $B_1$};
  \node at (3.5,-0.45) {\footnotesize $B_2$};
  \node at (4.5,-0.45) {\footnotesize $B_3$};
  \node at (6,-0.45) {\footnotesize $B_4$};
  \node at (7.5,-0.45) {\footnotesize $B_5$};
  \node at (8.5,-0.45) {\footnotesize $B_6$};
  \foreach \pos/\val in {1/0.5,2/3.5,3/4.5,4/5.5,5/7.5,6/8.5} {
    \node[minimum size=0.5cm,inner sep=1pt] (n\pos) at (\pos*1.45-1.2,2.3) {$\pos$};
    \draw[->] (n\pos) -- (\val,1.03);
  }
  \node[right] at (9.2,1.6) {\footnotesize $i\mapsto b_i=\sigma_\alpha(i)$};
\end{tikzpicture}
\caption{The diagram shows the value blocks of
\(\alpha=(2,0,0,1,0,0)\), which partition \([9]\) into the consecutive
intervals \(B_1,\dots,B_6\) of sizes \(3,1,1,2,1,1\).  An arrow joins
each \(i\in[6]\) to its base value \(b_i=\sigma_\alpha(i)\), which is
set in bold.  The two blocks with more than one element are shaded.}
\label{fig:blocks}
\end{figure}

\begin{lem}\label{lem:blocks}
In the notation of \cref{def:blocks}, the following hold.
\begin{enumerate}[label=(\roman*),leftmargin=2.2em]
\item\label{it:partition}
The blocks \(B_1,B_2,\dots,B_n\) are consecutive intervals that
partition \([N]\) in this order, that is, \(b_1=1\),
\(b_{i+1}=b_i+\alpha_i+1\) for \(i<n\), and \(b_n+\alpha_n=N\).  In
particular, \(b_1<b_2<\cdots<b_n\) and
\(b_1+\alpha_1<b_2+\alpha_2<\cdots<b_n+\alpha_n\).
\item\label{it:sigma}
The permutation \(\sigma_\alpha\) is increasing on \([n]\), and
\(\length(\sigma_\alpha)=\sum_{i=1}^n\alpha_i(n-i)\).  For every
polynomial \(f\) in \(x_1,\dots,x_n\) with coefficients in \(A\), we
have
\begin{equation}\label{eq:sigma-substitution}
\sigma_\alpha f=f(x_{b_1},x_{b_2},\dots,x_{b_n}).
\end{equation}
\item\label{it:rho}
The permutation \(\rho_\alpha\) sends each base value \(b_i\) to the
top \(b_i+\alpha_i\) of its block, and
\begin{equation}\label{eq:rho-sigma}
\rho_\alpha\sigma_\alpha=\cmpl{b_1+\alpha_1,\,b_2+\alpha_2,\,\dots,\,b_n+\alpha_n},
\qquad
\length(\rho_\alpha\sigma_\alpha)=\length(\sigma_\alpha)+r.
\end{equation}
\item\label{it:reduced}
The length of \(\rho_\alpha\) is \(r\), the concatenation of the words
\eqref{eq:rho} for \(\rho_1,\dots,\rho_n\) is a reduced word for
\(\rho_\alpha\), and \(\partial_{\rho_\alpha}=\partial_{\rho_1}\cdots\partial_{\rho_n}\)
with \(\partial_{\rho_i}=\partial_{b_i+\alpha_i-1}\cdots\partial_{b_i}\).
\end{enumerate}
\end{lem}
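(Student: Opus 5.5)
We verify the four parts in order; each is a direct computation from \cref{def:blocks}, \cref{def:completion}, and the facts \eqref{eq:prefix-length}, \eqref{eq:completion-length}, \eqref{eq:completion-compose}.

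For \ref{it:partition}, read off from \eqref{eq:blocks} that \(b_1=1\) and \(b_{i+1}-b_i=\alpha_i+1=\abs{B_i}\). Thus \(B_{i+1}\) begins immediately after the top \(b_i+\alpha_i\) of \(B_i\). Summing gives \(b_n+\alpha_n=n+r=N\), so the blocks partition \([N]\) in order. Both monotonicity claims are then immediate.

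For \ref{it:sigma}, \(\sigma_\alpha\) is increasing on \([n]\) by \ref{it:partition}. Its length comes from \eqref{eq:completion-length}: the first sum vanishes, and the non-base values below \(b_i\) are exactly the non-base elements of \(B_1,\dots,B_{i-1}\), of which there are \(\alpha_1+\cdots+\alpha_{i-1}\). Hence
\[
\length(\sigma_\alpha)=\sum_{i}\sum_{j<i}\alpha_j=\sum_j\alpha_j(n-j).
\]
The substitution \eqref{eq:sigma-substitution} follows from \eqref{eq:action}, since \(f\) involves only \(x_1,\dots,x_n\) and \(\sigma_\alpha(i)=b_i\) for \(i\le n\).

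For \ref{it:rho} and \ref{it:reduced}, first note that \(\rho_i=s_{b_i+\alpha_i-1}\cdots s_{b_i}\) is the cycle \(b_i\mapsto b_i+1\mapsto\cdots\mapsto b_i+\alpha_i\mapsto b_i\) on \(B_i\), and is the identity off \(B_i\). The supports \(B_i\) are disjoint, so the \(\rho_i\) commute and \(\rho_\alpha(b_i)=b_i+\alpha_i\). Therefore \(\rho_\alpha\sigma_\alpha\) sends \(i\le n\) to \(b_i+\alpha_i\). Its remaining values, at positions \(n+1,\dots,N\), are the non-top elements. These appear in increasing order: \(\rho_\alpha\) maps the non-base elements of \(B_i\), in increasing order, to the non-top elements of \(B_i\), in increasing order, and the blocks are ordered. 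This gives the first identity in \eqref{eq:rho-sigma}. For the length, apply \eqref{eq:completion-length} again: the number of non-top values below \(b_i+\alpha_i\) is \((\alpha_1+\cdots+\alpha_{i-1})+\alpha_i\). So \(\length(\rho_\alpha\sigma_\alpha)=\length(\sigma_\alpha)+r\).

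For \ref{it:reduced}, \(\length(\rho_\alpha\sigma_\alpha)\le\length(\rho_\alpha)+\length(\sigma_\alpha)\), so \(\length(\rho_\alpha)\ge r\). The concatenated word has exactly \(r\) letters, so it is reduced and \(\length(\rho_\alpha)=r\). The factorization \(\partial_{\rho_\alpha}=\partial_{\rho_1}\cdots\partial_{\rho_n}\) then follows because \(\partial_w\) may be computed from any reduced word. The only mildly delicate point is checking that \(\rho_\alpha\) preserves the increasing order of the appended values in \ref{it:rho}, and this reduces to the block-by-block observation above.
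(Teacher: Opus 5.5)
Your parts (i) and (ii) follow the paper's argument, and so does your outline of (iii). However, the description of \(\rho_i\) from which you derive (iii) is wrong. With the convention \((vw)(i)=v(w(i))\), the rightmost factor \(s_{b_i}\) of \(\rho_i=s_{b_i+\alpha_i-1}\cdots s_{b_i}\) acts first. Hence \(\rho_i(b_i)=b_i+\alpha_i\) and \(\rho_i(b_i+j)=b_i+j-1\) for \(1\le j\le\alpha_i\), so \(\rho_i\) is the cycle \(b_i\mapsto b_i+\alpha_i\mapsto b_i+\alpha_i-1\mapsto\cdots\mapsto b_i+1\mapsto b_i\). The cycle \(b_i\mapsto b_i+1\mapsto\cdots\mapsto b_i+\alpha_i\mapsto b_i\) that you wrote down is \(\rho_i^{-1}=s_{b_i}s_{b_i+1}\cdots s_{b_i+\alpha_i-1}\). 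Taken at face value, it gives \(\rho_\alpha(b_i)=b_i+1\ne b_i+\alpha_i\) whenever \(\alpha_i\ge2\). It also sends the non-base element \(b_i+\alpha_i-1\) to the top \(b_i+\alpha_i\), and the top to \(b_i\). So the two facts you then use do not follow from your description; they contradict it. These facts are that \(\rho_\alpha(b_i)=b_i+\alpha_i\), and that \(\rho_\alpha\) carries the non-base elements of \(B_i\) increasingly onto the non-top elements of \(B_i\). Both are true, so the repair is to compute \(\rho_i\) value by value as above, which is exactly what the paper does. After that, your derivation of \(\rho_\alpha\sigma_\alpha=\cmpl{b_1+\alpha_1,\dots,b_n+\alpha_n}\) is correct. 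So is your length count, where the first count in the length formula vanishes because the tops increase by (i).

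For (iv) you take a different route from the paper. The paper observes that, by (i), the letters \(s_j\) with \(b_i\le j<b_i+\alpha_i\) of the concatenated word are pairwise distinct, and uses the fact that a word with no repeated simple transposition is reduced. You instead get \(\ell(\rho_\alpha)\ge r\) from \(\ell(\rho_\alpha\sigma_\alpha)\le\ell(\rho_\alpha)+\ell(\sigma_\alpha)\) and the length identity in (iii), and conclude because the word has \(r\) letters. This is valid and avoids the distinct-letters fact. The cost is that your (iv) depends on the length computation in (iii), whereas the paper's (iv) stands on its own. You should also say that each block word \(s_{b_i+\alpha_i-1}\cdots s_{b_i}\) is reduced, being a factor of a reduced word. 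That is what makes \(\partial_{\rho_i}\) equal to the chain \(\partial_{b_i+\alpha_i-1}\cdots\partial_{b_i}\).
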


\begin{proof}
Part \ref{it:partition} is immediate from \eqref{eq:blocks}.  For
\ref{it:sigma}, the permutation \(\sigma_\alpha\) is increasing on
\([n]\) because \(b_1<\cdots<b_n\), and thus the
first count in \eqref{eq:completion-length} is zero for
\(\sigma_\alpha\).  The elements of \([N]\setminus\{b_1,\dots,b_n\}\)
smaller than \(b_i\) are the non-base values of
\(B_1,\dots,B_{i-1}\), and there are \(\alpha_1+\cdots+\alpha_{i-1}\)
of them.  Summing over \(i\) gives \(\sum_{j=1}^n\alpha_j(n-j)\).
Formula \eqref{eq:sigma-substitution} follows from \eqref{eq:action},
since \(\sigma_\alpha(i)=b_i\) for \(i\in[n]\).

For \ref{it:rho}, let \(\alpha_i>0\).  When \(\rho_i\) is applied to a
value, its factors act from right to left, so \(s_{b_i}\) moves
\(b_i\) to \(b_i+1\), then \(s_{b_i+1}\) moves it to \(b_i+2\), and so
on up to \(b_i+\alpha_i\).  A value \(b_i+j\) with
\(1\le j\le\alpha_i\) is moved only by \(s_{b_i+j-1}\), which sends it
to \(b_i+j-1\).  Thus \(\rho_i\) fixes every value outside \(B_i\),
sends \(b_i\) to \(b_i+\alpha_i\), and lowers the other values of
\(B_i\) by one.  Since the \(\rho_i\) move disjoint sets of values,
\(\rho_\alpha\) acts in this way on every block at once.  It follows
that \(\rho_\alpha\sigma_\alpha\) sends each position \(i\le n\) to
\(\rho_\alpha(b_i)=b_i+\alpha_i\).  At the positions larger than
\(n\), the permutation \(\sigma_\alpha\) takes the non-base values in
increasing order, and \(\rho_\alpha\) lowers each of them by one.
Since every block is an interval, this gives the values other than the
tops, in increasing order, which proves the first formula in
\eqref{eq:rho-sigma}.  For the length, the first count in
\eqref{eq:completion-length} is zero by \ref{it:partition}.  The
values other than the tops that are smaller than \(b_i+\alpha_i\) are
those of \(B_1,\dots,B_i\), and there are \(\alpha_1+\cdots+\alpha_i\)
of them.  Summing over \(i\) gives
\(\sum_{j=1}^n\alpha_j(n-j)+r=\length(\sigma_\alpha)+r\), which proves
\ref{it:rho}.

For \ref{it:reduced}, the letters of the concatenated word are the
\(s_j\) with \(b_i\le j<b_i+\alpha_i\) for some \(i\in[n]\), so by
\ref{it:partition} no simple transposition occurs twice in it.  A word
in which each simple transposition occurs at most once is reduced.
Thus the concatenated word is a reduced word of length \(r\) for
\(\rho_\alpha\), each of its factors \eqref{eq:rho} is reduced as
well, and
\(\partial_{\rho_\alpha}=\partial_{\rho_1}\cdots\partial_{\rho_n}\).
This completes the proof.
\end{proof}

\begin{prop}\label{prop:extraction}
For every \(f\in A[x_1,\dots,x_n]\), we have
\begin{equation}\label{eq:extraction}
\bigl(\partial_{\rho_\alpha}\sigma_\alpha f\bigr)\big|_{\xx=0}
=[\xx^\alpha]f.
\end{equation}
\end{prop}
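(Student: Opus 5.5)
By \(A\)-linearity of both sides of \eqref{eq:extraction}, it suffices to treat a single monomial \(f=\xx^\beta\) with \(\beta\in\NN^n\). We then show that the left side is \(1\) if \(\beta=\alpha\) and \(0\) otherwise.

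First, by \eqref{eq:sigma-substitution}, we have \(\sigma_\alpha\xx^\beta=x_{b_1}^{\beta_1}\cdots x_{b_n}^{\beta_n}\). In this monomial the non-base variables \(x_{b_i+1},\dots,x_{b_i+\alpha_i}\) do not occur, for every \(i\). Next, by \cref{lem:blocks}\ref{it:reduced}, we write \(\partial_{\rho_\alpha}=\partial_{\rho_1}\cdots\partial_{\rho_n}\) with \(\partial_{\rho_i}=\partial_{b_i+\alpha_i-1}\cdots\partial_{b_i}\). The operator \(\partial_{\rho_i}\) involves only the indices \(j\) with \(b_i\le j<b_i+\alpha_i\). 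Hence it only touches variables with indices in the block \(B_i\), and it is linear over polynomials in variables from the other blocks. By \cref{lem:blocks}\ref{it:partition} the blocks are disjoint, so the operators \(\partial_{\rho_i}\) commute with one another. We may therefore factor:
\[
\partial_{\rho_\alpha}\bigl(x_{b_1}^{\beta_1}\cdots x_{b_n}^{\beta_n}\bigr)
=\prod_{i=1}^n \partial_{\rho_i}\bigl(x_{b_i}^{\beta_i}\bigr).
\]
This factorization is the one step needing care. We justify it by applying \(\partial_{\rho_n}\) first and then the others in turn. At each stage, the factors already produced involve only variables from other blocks, so they pass through the operator as constants.

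Now \cref{lem:newton}\ref{it:newton-chain}, with \(i=b_i\), \(k=\alpha_i\) and \(m=\beta_i\), gives
\[
\partial_{\rho_i}\bigl(x_{b_i}^{\beta_i}\bigr)=h_{\beta_i-\alpha_i}(x_{b_i},\dots,x_{b_i+\alpha_i}).
\]
Setting \(\xx=0\), each factor becomes \(1\) if \(\beta_i=\alpha_i\) and \(0\) otherwise. This holds because \(h_0=1\), \(h_m\) is homogeneous of positive degree for \(m>0\), and \(h_m=0\) for \(m<0\). So the product is \(\delta_{\beta,\alpha}\), which is exactly \([\xx^\alpha]\xx^\beta\).

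Alternatively, we could apply \cref{lem:newton}\ref{it:newton-coeff} block by block directly to \(\sigma_\alpha f\). The monomial computation above avoids having to track the order of the evaluations, so we prefer it.
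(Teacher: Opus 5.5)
Your proof is correct and follows essentially the same route as the paper: reduce to a monomial by \(A\)-linearity, rename the variables with \(\sigma_\alpha\), factor \(\partial_{\rho_\alpha}\) over the disjoint blocks, and evaluate each factor at zero. The only difference is cosmetic: the paper applies \cref{lem:newton}\ref{it:newton-coeff} to each factor, whereas you use part \ref{it:newton-chain} and evaluate \(h_{\beta_i-\alpha_i}\) at zero directly, which is the same computation.
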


\begin{proof}
Since both sides of \eqref{eq:extraction} are \(A\)-linear in \(f\),
it is enough to prove the proposition for a monomial \(f=\xx^\gamma\)
with \(\gamma\in\NN^n\).  By \eqref{eq:sigma-substitution} and
\cref{lem:blocks}\ref{it:reduced}, we have
\(\sigma_\alpha f=\prod_{i=1}^nx_{b_i}^{\gamma_i}\) and
\(\partial_{\rho_\alpha}=\partial_{\rho_1}\cdots\partial_{\rho_n}\).
The chain \(\partial_{\rho_i}=\partial_{b_i+\alpha_i-1}\cdots\partial_{b_i}\)
involves only the variables \(x_j\) with \(j\in B_i\), and
\(x_{b_i}\) is the only one of them that occurs in this monomial.
Since the blocks are disjoint, \(\partial_{\rho_\alpha}\sigma_\alpha f\)
is the product of the polynomials \(\partial_{\rho_i}(x_{b_i}^{\gamma_i})\),
and by \cref{lem:newton}\ref{it:newton-coeff} for the variable
\(x_{b_i}\) and \(k=\alpha_i\), the value of the \(i\)th factor at
\(\xx=0\) is the coefficient of \(x_{b_i}^{\alpha_i}\) in
\(x_{b_i}^{\gamma_i}\).  Thus the left side of \eqref{eq:extraction} is \(1\)
if \(\gamma=\alpha\) and \(0\) otherwise, which is the coefficient of
\(\xx^\alpha\) in \(\xx^\gamma\), as desired.
\end{proof}

\begin{exam}\label{ex:extraction}
With the same setting as in \cref{ex:blocks}, let us compute the left
side of \eqref{eq:extraction} for the polynomial \(F\) of
\eqref{eq:ex-F}.  By \eqref{eq:sigma-substitution}, the block
placement renames \(x_1,\dots,x_6\) as \(x_1\), \(x_4\), \(x_5\),
\(x_6\), \(x_8\), \(x_9\), respectively, so that
\[
\sigma_\alpha F=P\cdot(x_6+x_9-y_1-y_2),
\qquad
\partial_6 \sigma_\alpha F=P,
\]
where \(P=(x_1-y_1)(x_1-y_2)(x_1-y_3)=x_1^3-e_1x_1^2+e_2x_1-e_3\), with
\(e_1,e_2,e_3\) as in \cref{ex:F}.  Applying
\cref{lem:newton}\ref{it:newton-chain} to each power of \(x_1\) in
\(P\), we obtain
\begin{align*}
\partial_1P&=(x_1^2+x_1x_2+x_2^2)-e_1(x_1+x_2)+e_2,\\
\partial_2\partial_1P&=(x_1+x_2+x_3)-e_1.
\end{align*}
Since \(\partial_{\rho_\alpha}=\partial_2\partial_1\partial_6\),
evaluating at \(\xx=0\) gives
\[
\bigl(\partial_2\partial_1\partial_6\sigma_\alpha F\bigr)\big|_{\xx=0}
=-e_1=-y_1-y_2-y_3=[x_1^2x_4]F
\]
by \eqref{eq:ex-coefficients}, as \cref{prop:extraction} asserts.
Every other monomial of \(F\) contributes zero at \(\xx=0\).
\end{exam}

We call \(\partial_{\rho_\alpha}\sigma_\alpha\) the \emph{extraction
operator} of \(\alpha\).  \Cref{prop:extraction} will be used in the
proof of \cref{thm:main} in
\cref{sec:main}, with \(A=\ZZ[\yy]\) and
\(f=\partial_{w/v}\SSch_u(\xx;\yy)\), which is a polynomial in
\(x_1,\dots,x_n\) by \cref{prop:skew-triple}.


\section{Lifting the skew divided difference}
\label{sec:lifting}

In this section we define the block lift (\cref{def:lifts}) and prove
\cref{thm:operator}, which says that
\(\partial_{\widetilde w_\alpha/\widetilde v_\alpha}
=\partial_{\rho_\alpha}\sigma_\alpha\partial_{w/v}\), where
\(\partial_{\rho_\alpha}\sigma_\alpha\) is the extraction operator of
\cref{prop:extraction}.  The proof applies the
factorization rule of \cref{lem:factor} twice, once to
\(\sigma_\alpha\) and once to \(\rho_\alpha\), and only a single term
survives each time.

\subsection{A factorization rule}

\begin{lem}\label{lem:factor}
Let \(u,w\in S_N\) with
\(\length(uw)=\length(u)+\length(w)\).  Then, for
every \(v\in S_N\),
\begin{equation}\label{eq:factor}
\partial_{uw/v}
=\sum_{v_1,v_2}\partial_{u/v_1}\,\partial_{w/v_2},
\end{equation}
where the sum is over all pairs \((v_1,v_2)\in S_N^2\) with
\(v_1v_2=v\) and \(\length(v_1)+\length(v_2)=\length(v)\).
Moreover, a pair contributes a nonzero term only if
\(v_1\le u\) and \(v_2\le w\).
\end{lem}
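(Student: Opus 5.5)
The plan is to work directly with the subword formula of \cref{def:skew}, using a reduced word for \(uw\) obtained by concatenation. Fix reduced words \(u=s_{i_1}\cdots s_{i_p}\) and \(w=s_{i_{p+1}}\cdots s_{i_{p+q}}\) with \(p=\length(u)\) and \(q=\length(w)\). Since \(\length(uw)=p+q\), the concatenated word is a reduced word for \(uw\). By \cref{lem:leibniz}, \(\partial_{uw/v}\) does not depend on the reduced word, so we may compute it with this one. Then \(\partial_{uw/v}\) is the sum of \(\phi^J_1\cdots\phi^J_{p+q}\) over subsets \(J\subseteq[p+q]\) whose letters form a reduced word for \(v\).

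Split each such \(J\) as \(J_1=J\cap[p]\) and \(J_2=J\cap\{p+1,\dots,p+q\}\). The letters indexed by \(J_1\) form a word for some \(v_1\), and those indexed by \(J_2\) form a word for some \(v_2\). Since the whole word of \(J\) is reduced, both subwords are reduced, \(v_1v_2=v\), and \(\length(v_1)+\length(v_2)=\abs{J}=\length(v)\).

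Conversely, take a pair \((v_1,v_2)\) with \(v_1v_2=v\) and lengths adding. Any \(J_1\) giving a reduced word for \(v_1\) in the word for \(u\), together with any \(J_2\) giving a reduced word for \(v_2\) in the word for \(w\), has union whose word has length \(\length(v_1)+\length(v_2)=\length(v)\) and product \(v\). That word is therefore a reduced word for \(v\). This gives a bijection between the contributing sets \(J\) and the triples \((v_1,v_2,J_1,J_2)\).

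The product factors accordingly: \(\phi^J_1\cdots\phi^J_p\) depends only on \(J_1\) and equals the corresponding term for \(\partial_{u/v_1}\), and the remaining factor is the term for \(\partial_{w/v_2}\). Summing over \(J_1\) and \(J_2\) independently for fixed \((v_1,v_2)\) gives \(\partial_{u/v_1}\partial_{w/v_2}\), which proves \eqref{eq:factor}. The final claim then follows from \ref{it:skew-bruhat}: if \(v_1\not\le u\) or \(v_2\not\le w\), the corresponding factor vanishes.

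The only delicate point is the converse direction of the bijection. One must check that concatenating reduced words for \(v_1\) and \(v_2\) with additive lengths yields a reduced word for \(v\); this holds because the word has length \(\length(v)\) and multiplies to \(v\). One should also note that distinct pairs \((v_1,v_2)\) give disjoint sets of \(J\), since \(J\) determines \(v_1\) and \(v_2\). Beyond this, the argument is bookkeeping.
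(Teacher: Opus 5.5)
Your proof is correct, but it takes a different route from the paper. The paper never expands the subword formula. It defines \(T_v\) as the right side of \eqref{eq:factor} and computes \(\partial_{uw}(fg)=\partial_u\partial_w(fg)\) by applying \eqref{eq:leibniz} twice. It then collapses \(\partial_{v_1}\partial_{v_2}g\) with the nil-Coxeter rule: \(\partial_{v_1}\partial_{v_2}=\partial_{v_1v_2}\) when \(\length(v_1v_2)=\length(v_1)+\length(v_2)\), and \(0\) otherwise. It concludes by the uniqueness part of \cref{lem:leibniz}.

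You instead compute \(\partial_{uw/v}\) with the concatenated reduced word. You build an explicit bijection between the contributing subsets \(J\) and the data \((v_1,v_2,J_1,J_2)\), and the terms factor as \(\bigl(\phi^J_1\cdots\phi^J_p\bigr)\bigl(\phi^J_{p+1}\cdots\phi^J_{p+q}\bigr)\). In your version the length-additivity condition comes from two facts: a prefix or suffix of a reduced word is reduced, and a word of length \(\length(v)\) with product \(v\) is reduced. In the paper it comes from the nil-Coxeter relation instead.

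What each buys: your argument shows that \eqref{eq:factor} already holds term by term in the subword expansion, with each term on the left matched to exactly one term on the right. The paper's argument is shorter and purely formal, using only the characterization of \(\partial_{uw/v}\) by the Leibniz rule. Both rest on \cref{lem:leibniz}: yours on the independence of the reduced word, the paper's on the uniqueness statement from which that independence is derived.
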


\begin{proof}
By the uniqueness in \cref{lem:leibniz}, it is sufficient to show that
the operators \(T_v\) given by the right side of
\eqref{eq:factor} satisfy \eqref{eq:leibniz} with \(uw\) in
place of \(w\).  Recall that \(\partial_{v_1}\partial_{v_2}\) is
\(\partial_{v_1v_2}\) if
\(\length(v_1v_2)=\length(v_1)+\length(v_2)\) and is \(0\) otherwise.
In particular, \(\partial_{uw}=\partial_u\partial_w\), and for
\(f,g\in A[\xx]\) we have
\begin{align*}
\partial_{uw}(fg)
&=\partial_u\sum_{v_2\in S_N}
\bigl(\partial_{w/v_2}f\bigr)\bigl(\partial_{v_2}g\bigr)
=\sum_{v_1,v_2\in S_N}
\bigl(\partial_{u/v_1}\partial_{w/v_2}f\bigr)
\bigl(\partial_{v_1}\partial_{v_2}g\bigr)\\
&=\sum_{v\in S_N}\bigl(T_vf\bigr)\bigl(\partial_vg\bigr),
\end{align*}
where the last equality uses the rule for
\(\partial_{v_1}\partial_{v_2}\), with the terms grouped according to
\(v=v_1v_2\).  This proves
\eqref{eq:factor}, and the last assertion follows from
\ref{it:skew-bruhat}.
\end{proof}

Samuel~\cite{Samuel2024} stated the case
\(w=s_i\) of \eqref{eq:factor} as a recurrence for the same
operators.  We have the following corollary describing a skew divided
difference
whose endpoints \(\sigma w\) and \(\sigma v\) share a left factor
\(\sigma\) that is increasing on \([n]\).

\begin{cor}\label{cor:prefix}
For \(n\le N\), let \(\sigma\in S_N\) be increasing on \([n]\), and let
\(v,w\in S_n\subseteq S_N\).  Then
\[
\partial_{\sigma w/\sigma v}=\sigma\partial_{w/v}.
\]
\end{cor}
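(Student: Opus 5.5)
We apply \cref{lem:factor} with \(u=\sigma\) and \(w\in S_n\). By \eqref{eq:prefix-length}, \(\length(\sigma w)=\length(\sigma)+\length(w)\), so the lemma gives
\[
\partial_{\sigma w/\sigma v}=\sum_{v_1,v_2}\partial_{\sigma/v_1}\,\partial_{w/v_2},
\]
summed over length-additive factorizations \(v_1v_2=\sigma v\). A term is nonzero only if \(v_2\le w\). Since \(w\in S_n\), every element below \(w\) in Bruhat order is a subword of a reduced word for \(w\) by \eqref{eq:bruhat}, so \(v_2\in S_n\). Then \(v_1=\sigma v v_2^{-1}\) lies in the coset \(\sigma S_n\).

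The key step is to show that only the pair \((v_1,v_2)=(\sigma,v)\) survives. Write \(v_1=\sigma z\) with \(z=vv_2^{-1}\in S_n\). Nonvanishing also requires \(v_1\le\sigma\). By \eqref{eq:prefix-length}, \(\length(\sigma z)=\length(\sigma)+\length(z)\). But \(v_1\le\sigma\) forces \(\length(v_1)\le\length(\sigma)\). Hence \(\length(z)=0\), so \(z=\id\), \(v_1=\sigma\), and \(v_2=v\). This pair is indeed length-additive: \(\length(\sigma v)=\length(\sigma)+\length(v)\), again by \eqref{eq:prefix-length}.

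So \(\partial_{\sigma w/\sigma v}=\partial_{\sigma/\sigma}\,\partial_{w/v}\). By \ref{it:skew-ends}, \(\partial_{\sigma/\sigma}=\sigma\), which gives the claim. If \(v\not\le w\), both sides vanish consistently: the only candidate term contains \(\partial_{w/v}=0\).

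The main point to verify carefully is that \(v_2\le w\) with \(w\in S_n\) forces \(v_2\in S_n\), viewed inside \(S_N\). This is immediate from the subword characterization \eqref{eq:bruhat}, since a reduced word for \(w\) uses only \(s_i\) with \(i<n\). The rest is bookkeeping with the length formula.
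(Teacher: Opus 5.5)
Your proposal is correct and follows the paper's proof essentially step for step. You apply \cref{lem:factor} with \(u=\sigma\), deduce \(v_2\in S_n\) from \(v_2\le w\), and force \(v_1=\sigma\) from \(v_1\le\sigma\) together with \eqref{eq:prefix-length} on the coset \(\sigma S_n\). You then conclude with \(\partial_{\sigma/\sigma}=\sigma\). Writing \(v_1=\sigma z\) with \(\length(\sigma z)=\length(\sigma)+\length(z)\) simply unpacks the paper's remark that \(\sigma\) is the unique element of length at most \(\length(\sigma)\) in \(\sigma S_n\).
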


\begin{proof}
By \eqref{eq:prefix-length}, we can apply \cref{lem:factor} with
\(\sigma\) and \(\sigma v\) in place of \(u\) and \(v\), and
\eqref{eq:factor} expresses
\(\partial_{\sigma w/\sigma v}\) as the sum of the products
\(\partial_{\sigma/v_1}\partial_{w/v_2}\) over the pairs with
\(v_1v_2=\sigma v\) and
\(\length(v_1)+\length(v_2)=\length(\sigma v)\).  In a nonzero term
we have \(v_2\le w\), so \(v_2\in S_n\).  We also have \(v_1\le\sigma\), so that
\(\length(v_1)\le\length(\sigma)\).  By \eqref{eq:prefix-length}, the
permutation \(\sigma\) is the only element of the coset \(\sigma S_n\) of
length at most \(\length(\sigma)\), and
\(v_1=\sigma vv_2^{-1}\) is such an element, so that
\(v_1=\sigma\) and \(v_2=v\).  This pair occurs in the sum,
since \(\length(\sigma)+\length(v)=\length(\sigma v)\) by
\eqref{eq:prefix-length}, and its term is \(\sigma\partial_{w/v}\)
because \(\partial_{\sigma/\sigma}=\sigma\) by \ref{it:skew-ends}, as
desired.
\end{proof}

If \(v\le w\), then \(\sigma v\le\sigma w\) by \eqref{eq:bruhat}, since
by \eqref{eq:prefix-length} a reduced word for \(\sigma\) followed by a
reduced word for \(w\) is a reduced word for \(\sigma w\).

We will also need the following result, in which every letter of a
reduced word for the left factor \(u\) is a left ascent of the
lower endpoint.

\begin{cor}\label{cor:ascent}
Let \(u,v,w\in S_N\) with
\(\length(uw)=\length(u)+\length(w)\), and suppose
that every letter of some reduced word for \(u\) is a left ascent
of \(v\).  Then
\begin{equation}\label{eq:ascent-extension}
\partial_{uw/v}=\partial_u\partial_{w/v}.
\end{equation}
\end{cor}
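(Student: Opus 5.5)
We apply \cref{lem:factor} to the factorization \(uw\), which is allowed because \(\length(uw)=\length(u)+\length(w)\). This expresses \(\partial_{uw/v}\) as the sum of \(\partial_{u/v_1}\partial_{w/v_2}\) over all pairs with \(v_1v_2=v\) and \(\length(v_1)+\length(v_2)=\length(v)\). A term can be nonzero only if \(v_1\le u\). The aim is to show that the only surviving pair is \(v_1=\id\), \(v_2=v\). That pair does occur in the sum, and its term is \(\partial_{u/\id}\partial_{w/v}=\partial_u\partial_{w/v}\) by \ref{it:skew-ends}.

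So suppose \(v_1\ne\id\) and \(v_1\le u\). Fix the reduced word \(u=s_{j_1}\cdots s_{j_k}\) whose letters are all left ascents of \(v\). By \eqref{eq:bruhat}, \(v_1\) has a reduced word that is a subword of this one. Its first letter \(s_j\) is therefore one of the letters of the chosen word, so \(s_j\) is a left ascent of \(v\).

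On the other hand, \(s_j\) is a left descent of \(v_1\), since \(v_1\) has a reduced word beginning with \(s_j\). Because \(\length(v)=\length(v_1)+\length(v_2)\), a reduced word for \(v_1\) followed by one for \(v_2\) is a reduced word for \(v\), and it begins with \(s_j\). Hence \(\length(s_jv)=\length(v)-1\), so \(s_j\) is a left descent of \(v\).

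Exactly one of ascent and descent holds, so this is a contradiction. Therefore \(v_1=\id\), which forces \(v_2=v\) and gives \eqref{eq:ascent-extension}.

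The only step needing care is the claim that the first letter of a reduced subword of \(u\) is among the specified ascent letters. This is immediate, since the subword is taken from the fixed reduced word itself. The argument also shows that the hypothesis is needed only for that one reduced word.
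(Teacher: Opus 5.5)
Your proof is correct and follows essentially the same route as the paper. You apply \cref{lem:factor}, take the first letter \(s_j\) of a reduced subword for \(v_1\ne\id\) inside the fixed reduced word for \(u\), and conclude that \(s_j\) would be both a left ascent and a left descent of \(v\). The only cosmetic difference is that you get \(\length(s_jv)=\length(v)-1\) by concatenating reduced words, while the paper uses the subadditivity \(\length(s_jv_1v_2)\le\length(s_jv_1)+\length(v_2)\).
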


\begin{proof}
For a contradiction, suppose that a pair \((v_1,v_2)\) with
\(v_1\ne\id\) contributes a nonzero term to \eqref{eq:factor}.  Then
\(v_1\le u\) by \cref{lem:factor}, so some
reduced word \(s_{j_1}s_{j_2}\cdots s_{j_m}\) for \(v_1\) is a
subword of the given reduced word for \(u\).  Its first letter
\(s_{j_1}\) is a left ascent of \(v\) by hypothesis, and
\(\length(s_{j_1}v_1)=\length(v_1)-1\).  Since
\(\length(v_1)+\length(v_2)=\length(v)\), it follows that
\[
\length(s_{j_1}v)=\length(s_{j_1}v_1v_2)
\le\length(s_{j_1}v_1)+\length(v_2)=\length(v)-1,
\]
so \(s_{j_1}\) is a left descent of \(v\), a contradiction.  Thus
only the pair \((\id,v)\) contributes to \eqref{eq:factor}, and
\(\partial_{u/\id}=\partial_u\) by \ref{it:skew-ends}.
This proves \eqref{eq:ascent-extension}.
\end{proof}

\subsection{The block lift}\label{sec:block-lift}

For the remainder of the section, we fix \(u,v,w\in S_n\) and
\(\alpha\in\NN^n\), put \(r=\abs{\alpha}\) and \(N=n+r\), and
let \(b_i\), \(B_i\),
\(\sigma_\alpha\), \(\rho_i\), and \(\rho_\alpha\) be as in
\cref{def:blocks}.

\begin{defn}\label{def:lifts}
The \emph{block lift} of
\((u,v,w,\alpha)\) is the triple
\((\widetilde u_\alpha,\widetilde v_\alpha,\widetilde w_\alpha)\in S_N^3\)
given by
\begin{equation}\label{eq:lifts}
\widetilde u_\alpha=u\times1_r,\qquad
\widetilde v_\alpha=\sigma_\alpha(v\times1_r),\qquad
\widetilde w_\alpha=\rho_\alpha\sigma_\alpha(w\times1_r).
\end{equation}
We also put \(\widetilde w_\alpha^\circ=\sigma_\alpha(w\times1_r)\), so
that \(\widetilde w_\alpha=\rho_\alpha\widetilde w_\alpha^\circ\).
\end{defn}

See \cref{ex:lift} for a concrete example of the block lift, and
\cref{ex:slices} for further values of \(\alpha\).  Using
\eqref{eq:completion-compose}, we can write down an explicit
expression for each of the four permutations of \cref{def:lifts} as an
increasing completion.

\begin{lem}\label{lem:explicit}
In the notation of \cref{def:completion}, we have
\begin{align}
\widetilde u_\alpha&=\cmpl{u(1),\dots,u(n)},
&
\widetilde v_\alpha&=\cmpl{b_{v(1)},\dots,b_{v(n)}},
\label{eq:explicit-uv}\\
\widetilde w_\alpha^\circ&=\cmpl{b_{w(1)},\dots,b_{w(n)}},
&
\widetilde w_\alpha&=\cmpl{b_{w(1)}+\alpha_{w(1)},\dots,b_{w(n)}+\alpha_{w(n)}}.
\label{eq:explicit-w}
\end{align}
Moreover, every \(s_j\) with \(b_i\le j<b_i+\alpha_i\) for some
\(i\in[n]\) is a left ascent of \(\widetilde v_\alpha\).  These \(s_j\)
are the letters of the reduced word of \(\rho_\alpha\) in
\cref{lem:blocks}\ref{it:reduced}.
\end{lem}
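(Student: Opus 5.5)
The four formulas follow from \eqref{eq:completion-compose}. The ascent statement follows from \eqref{eq:completion-ascent}.

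For \(\widetilde u_\alpha\), the identity \(u\times1_r=\cmpl{u(1),\dots,u(n)}\) is the remark after \cref{def:completion}. For \(\widetilde v_\alpha\) and \(\widetilde w_\alpha^\circ\), recall from \eqref{eq:sigma} that \(\sigma_\alpha=\cmpl{b_1,\dots,b_n}\). Applying \eqref{eq:completion-compose} with \(m_i=b_i\) and the permutation \(v\), respectively \(w\), gives \(\cmpl{b_{v(1)},\dots,b_{v(n)}}\), respectively \(\cmpl{b_{w(1)},\dots,b_{w(n)}}\).

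For \(\widetilde w_\alpha\), I will use associativity: \(\widetilde w_\alpha=(\rho_\alpha\sigma_\alpha)(w\times1_r)\). By \eqref{eq:rho-sigma} in \cref{lem:blocks}\ref{it:rho}, \(\rho_\alpha\sigma_\alpha=\cmpl{b_1+\alpha_1,\dots,b_n+\alpha_n}\). A second application of \eqref{eq:completion-compose}, now with \(m_i=b_i+\alpha_i\), yields the claimed formula. This avoids tracking how \(\rho_\alpha\) acts on the appended values.

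For the ascent claim, fix \(i\) and \(j\) with \(b_i\le j<b_i+\alpha_i\), and put \(k=j+1\). Then \(b_i<k\le b_i+\alpha_i\), so \(k\in B_i\) but \(k\ne b_i\). Since the blocks partition \([N]\) by \cref{lem:blocks}\ref{it:partition}, \(k\) is not a base value of any block. The first \(n\) entries of \(\widetilde v_\alpha\) are \(b_{v(1)},\dots,b_{v(n)}\), which are exactly the base values \(\{b_1,\dots,b_n\}\) because \(v\) is a bijection. Hence \(k\notin\{b_{v(1)},\dots,b_{v(n)}\}\), and \(k\ge2\). So \eqref{eq:completion-ascent} shows that \(s_{k-1}=s_j\) is a left ascent of \(\widetilde v_\alpha\).

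Finally, the letters \(s_j\) with \(b_i\le j<b_i+\alpha_i\) are, by \eqref{eq:rho} and \cref{lem:blocks}\ref{it:reduced}, exactly the letters of the concatenated reduced word for \(\rho_\alpha\).

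No step is a real obstacle. The only point needing care is the bookkeeping in the last formula, and routing it through \eqref{eq:rho-sigma} handles it.
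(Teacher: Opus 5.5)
Your proposal is correct and follows the paper's proof essentially step for step. Like the paper, you obtain all four formulas from \eqref{eq:completion-compose} using \(\sigma_\alpha=\cmpl{b_1,\dots,b_n}\) and \eqref{eq:rho-sigma}, and you get the ascent claim from \eqref{eq:completion-ascent} by noting that the first \(n\) entries of \(\widetilde v_\alpha\) are base values; your explicit check that \(k\ge2\) is a small extra detail that the paper leaves implicit.
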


\begin{proof}
The one-line notation of \(u\times1_r\) is \(u(1)\cdots u(n)\) followed
by \(n+1,\dots,N\), which gives the formula for \(\widetilde u_\alpha\).
Since \(\sigma_\alpha=\cmpl{b_1,\dots,b_n}\) by \eqref{eq:sigma} and
\(\rho_\alpha\sigma_\alpha=\cmpl{b_1+\alpha_1,\dots,b_n+\alpha_n}\) by
\eqref{eq:rho-sigma}, the other three formulas follow from
\eqref{eq:lifts} and \eqref{eq:completion-compose}.

For the second assertion, the letters in question are the
\(s_{k-1}\) with \(b_i+1\le k\le b_i+\alpha_i\) for some \(i\in[n]\), so
that \(k\) is not a base value.  The first \(n\) entries of
\(\widetilde v_\alpha\) are base values by \eqref{eq:explicit-uv}, and
\eqref{eq:completion-ascent} shows that each such \(s_{k-1}\) is a
left ascent of \(\widetilde v_\alpha\).  This completes the proof.
\end{proof}

By \eqref{eq:explicit-uv} and \eqref{eq:explicit-w}, the permutation
\(\widetilde v_\alpha\) sends each \(i\le n\) to the smallest
element of the block \(B_{v(i)}\), and \(\widetilde w_\alpha^\circ\)
and \(\widetilde w_\alpha\) send it to the smallest and the largest
element of \(B_{w(i)}\), respectively.

\begin{thm}\label{thm:operator}
We have
\begin{equation}\label{eq:operator}
\partial_{\widetilde w_\alpha/\widetilde v_\alpha}
=\partial_{\rho_\alpha}\sigma_\alpha\partial_{w/v}.
\end{equation}
Moreover,
\begin{equation}\label{eq:lengths}
\length(\widetilde v_\alpha)=\length(v)+\length(\sigma_\alpha),
\qquad
\length(\widetilde w_\alpha)=\length(w)+\length(\sigma_\alpha)+r,
\end{equation}
and if \(v\le w\), then \(\widetilde v_\alpha\le\widetilde w_\alpha\).
\end{thm}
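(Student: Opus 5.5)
The plan is to apply the factorization rule twice, as announced at the start of the section: first \cref{cor:prefix} to peel off \(\sigma_\alpha\), then \cref{cor:ascent} to peel off \(\rho_\alpha\). Throughout, \(v\) and \(w\) are regarded as elements of \(S_n\subseteq S_N\), via \(v\times1_r\) and \(w\times1_r\).

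\emph{Step 1: the length formulas.} By \cref{lem:blocks}\ref{it:sigma}, \(\sigma_\alpha\) is increasing on \([n]\). Hence \eqref{eq:prefix-length} gives
\[
\length(\widetilde v_\alpha)=\length(\sigma_\alpha)+\length(v)
\qand
\length(\widetilde w_\alpha^\circ)=\length(\sigma_\alpha)+\length(w).
\]
For \(\widetilde w_\alpha\), note that \(\rho_\alpha\sigma_\alpha=\cmpl{b_1+\alpha_1,\dots,b_n+\alpha_n}\) is also increasing on \([n]\), by \cref{lem:blocks}\ref{it:partition} and \eqref{eq:rho-sigma}. Applying \eqref{eq:prefix-length} to it and using \eqref{eq:rho-sigma} gives
\[
\length(\widetilde w_\alpha)=\length(\rho_\alpha\sigma_\alpha)+\length(w)=\length(\sigma_\alpha)+r+\length(w).
\]
Since \(\length(\rho_\alpha)=r\) by \cref{lem:blocks}\ref{it:reduced}, this reads
\[
\length(\rho_\alpha\widetilde w_\alpha^\circ)=\length(\rho_\alpha)+\length(\widetilde w_\alpha^\circ),
\]
which is the hypothesis needed in Step 2.

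\emph{Step 2: the operator identity.} Apply \cref{cor:ascent} with \(\rho_\alpha\), \(\widetilde w_\alpha^\circ\), \(\widetilde v_\alpha\) in the roles of \(u\), \(w\), \(v\). The length additivity is Step 1. By \cref{lem:explicit}, every letter of the reduced word of \(\rho_\alpha\) from \cref{lem:blocks}\ref{it:reduced} is a left ascent of \(\widetilde v_\alpha\). Therefore
\[
\partial_{\widetilde w_\alpha/\widetilde v_\alpha}=\partial_{\rho_\alpha}\partial_{\widetilde w_\alpha^\circ/\widetilde v_\alpha}.
\]
Since \(\widetilde w_\alpha^\circ=\sigma_\alpha w\) and \(\widetilde v_\alpha=\sigma_\alpha v\), \cref{cor:prefix} gives \(\partial_{\widetilde w_\alpha^\circ/\widetilde v_\alpha}=\sigma_\alpha\partial_{w/v}\). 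Combining the two displays yields \eqref{eq:operator}.

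\emph{Step 3: the Bruhat comparison.} Suppose \(v\le w\). The remark after \cref{cor:prefix} gives \(\widetilde v_\alpha=\sigma_\alpha v\le\sigma_\alpha w=\widetilde w_\alpha^\circ\). By Step 1, a reduced word for \(\rho_\alpha\) followed by a reduced word for \(\widetilde w_\alpha^\circ\) is a reduced word for \(\widetilde w_\alpha\). The subword realizing \(\widetilde v_\alpha\) inside the word for \(\widetilde w_\alpha^\circ\) therefore remains a subword of this longer word, and \eqref{eq:bruhat} gives \(\widetilde v_\alpha\le\widetilde w_\alpha\).

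I expect no real obstacle, since the ingredients are already in place. The only point needing care is the length additivity \(\length(\rho_\alpha\widetilde w_\alpha^\circ)=\length(\rho_\alpha)+\length(\widetilde w_\alpha^\circ)\). It rests on the observation that \(\rho_\alpha\sigma_\alpha\) is itself increasing on \([n]\), so that \eqref{eq:prefix-length} can be applied to it directly rather than to \(\sigma_\alpha\).
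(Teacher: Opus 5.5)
Your proposal is correct and follows essentially the same route as the paper: length additivity from \eqref{eq:prefix-length} applied to \(\sigma_\alpha\) and to \(\rho_\alpha\sigma_\alpha\), then \cref{cor:ascent} to peel off \(\partial_{\rho_\alpha}\), and \cref{cor:prefix} to identify \(\partial_{\widetilde w_\alpha^\circ/\widetilde v_\alpha}\) with \(\sigma_\alpha\partial_{w/v}\). Your Bruhat comparison is also the paper's argument, except that the paper phrases it as \(\widetilde v_\alpha\le\widetilde w_\alpha^\circ\le\widetilde w_\alpha\), whereas you carry the subword into the longer reduced word directly.
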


\begin{proof}
By \cref{lem:blocks}\ref{it:sigma}, \cref{cor:prefix} applies with
\(\sigma=\sigma_\alpha\).  Since \(\partial_{w/v}\) is the same operator
whether \(v\) and \(w\) are regarded in \(S_n\) or in \(S_N\), we obtain
\begin{equation}\label{eq:base-identity}
\partial_{\widetilde w_\alpha^\circ/\widetilde v_\alpha}=\sigma_\alpha\partial_{w/v},
\end{equation}
and \(\widetilde v_\alpha\le\widetilde w_\alpha^\circ\) if \(v\le w\).

Since \(\sigma_\alpha\) and \(\rho_\alpha\sigma_\alpha\) are increasing on
\([n]\) by \cref{lem:blocks}, \eqref{eq:prefix-length} and
\eqref{eq:rho-sigma} give
\[
\length(\widetilde v_\alpha)=\length(v)+\length(\sigma_\alpha),\qquad
\length(\widetilde w_\alpha^\circ)=\length(w)+\length(\sigma_\alpha),\qquad
\length(\widetilde w_\alpha)=\length(w)+\length(\sigma_\alpha)+r.
\]
This proves \eqref{eq:lengths}, and
\(\length(\widetilde w_\alpha)=\length(\rho_\alpha)+\length(\widetilde w_\alpha^\circ)\)
since \(\length(\rho_\alpha)=r\).  Every letter of the reduced word of
\(\rho_\alpha\) is a left ascent of \(\widetilde v_\alpha\) by
\cref{lem:explicit}, so \cref{cor:ascent} gives
\(\partial_{\widetilde w_\alpha/\widetilde v_\alpha}
=\partial_{\rho_\alpha}\partial_{\widetilde w_\alpha^\circ/\widetilde v_\alpha}\),
and \eqref{eq:operator} follows from \eqref{eq:base-identity}.
Finally, the reduced word of \(\rho_\alpha\) followed by a reduced word
for \(\widetilde w_\alpha^\circ\) is a reduced word for
\(\widetilde w_\alpha\), so
\(\widetilde w_\alpha^\circ\le\widetilde w_\alpha\), and hence
\(\widetilde v_\alpha\le\widetilde w_\alpha\) if \(v\le w\).  This
completes the proof.
\end{proof}

\begin{rmk}
For \(v=w=\id\), the block lift is
\((\widetilde u_\alpha,\sigma_\alpha,\rho_\alpha\sigma_\alpha)\), and
\eqref{eq:operator} says that the extraction operator
\(\partial_{\rho_\alpha}\sigma_\alpha\) of \cref{prop:extraction} is the
skew divided difference \(\partial_{\rho_\alpha\sigma_\alpha/\sigma_\alpha}\).
Thus \eqref{eq:operator} can be written as
\(\partial_{\rho_\alpha\sigma_\alpha/\sigma_\alpha}\partial_{w/v}
=\partial_{\rho_\alpha\sigma_\alpha(w\times1_r)/\sigma_\alpha(v\times1_r)}\).
\end{rmk}

\begin{exam}\label{ex:lift}
Take the same setting as in \cref{ex:blocks}, where
\(\alpha=(2,0,0,1,0,0)\), \(N=9\), and the base values are
\((b_1,\dots,b_6)=(1,4,5,6,8,9)\).  By \cref{def:lifts,lem:explicit},
the block lift and the permutation \(\widetilde w_\alpha^\circ\) are
\begin{align*}
\widetilde u_\alpha&=u\times1_3=4\,3\,5\,1\,2\,6\mid7\,8\,9,\\
\widetilde v_\alpha&=\cmpl{b_{v(1)},\dots,b_{v(6)}}
=\cmpl{b_1,b_4,b_2,b_6,b_3,b_5}
=1\,6\,4\,9\,5\,8\mid2\,3\,7,\\
\widetilde w_\alpha^\circ&=\cmpl{b_{w(1)},\dots,b_{w(6)}}
=\cmpl{b_1,b_6,b_4,b_3,b_2,b_5}
=1\,9\,6\,5\,4\,8\mid2\,3\,7,\\
\widetilde w_\alpha&=\cmpl{b_1+2,\,b_6,\,b_4+1,\,b_3,\,b_2,\,b_5}
=3\,9\,7\,5\,4\,8\mid1\,2\,6.
\end{align*}
As described after \cref{lem:explicit}, each value \(i\) of
\(v=142635\) is replaced in \(\widetilde v_\alpha\) by the smallest
element \(b_i\) of \(B_i\), and each value \(i\) of \(w=164325\) is
replaced in \(\widetilde w_\alpha\) by the largest element
\(b_i+\alpha_i\) of \(B_i\).  \Cref{fig:perms}
gives the dot diagrams of \(\widetilde v_\alpha\),
\(\widetilde w_\alpha^\circ\), and \(\widetilde w_\alpha\).  The
lengths of \(\widetilde v_\alpha\) and \(\widetilde w_\alpha\) are
\[
\length(\widetilde v_\alpha)=16=4+12,\qquad
\length(\widetilde w_\alpha)=22=7+12+3,
\]
which agree with \eqref{eq:lengths}.  For example, in the formula
\eqref{eq:completion-length} for \(\widetilde v_\alpha\), the first
count is \(\length(v)=4\), given by the pairs of entries \((6,4)\),
\((6,5)\), \((9,5)\), and \((9,8)\) of \(1\,6\,4\,9\,5\,8\).  The
numbers of non-base values \(2,3,7\) smaller than these six entries are
\(0,2,2,3,2,3\), which sum to \(\length(\sigma_\alpha)=12\).

By \cref{lem:explicit}, the three letters \(s_1\), \(s_2\), \(s_6\) of
\(\rho_\alpha=s_2s_1s_6\) are left ascents of
\(\widetilde v_\alpha=1\,6\,4\,9\,5\,8\mid2\,3\,7\), and
\eqref{eq:operator} reads
\begin{equation}\label{eq:ex-identity}
\partial_{\widetilde w_\alpha/\widetilde v_\alpha}
=\partial_2\partial_1\partial_6\sigma_\alpha\partial_{w/v},
\end{equation}
where \(\partial_2\partial_1=\partial_{\rho_1}\) and
\(\partial_6=\partial_{\rho_4}\).  The right side of
\eqref{eq:ex-identity} sends \(\SSch_u(\xx;\yy)\) to the polynomial
\(\partial_2\partial_1\partial_6\sigma_\alpha F\) computed in
\cref{ex:extraction}.
\end{exam}

\begin{figure}[ht]
\centering
\begin{tikzpicture}[x=0.34cm,y=0.34cm]
  \fill[gray!20] (0,0) rectangle (3,9);
  \fill[gray!20] (5,0) rectangle (7,9);
  \draw (0,0) rectangle (9,9);
  \draw[thick] (0,6) -- (9,6);
  \foreach \pos in {3,4,5,7,8} { \draw[thick] (\pos,0) -- (\pos,9); }
  \foreach \pos in {1,...,9} { \node[below] at (\pos-0.5,0) {\tiny \pos}; \node[left] at (0,\pos-0.5) {\tiny \pos}; }
  \foreach \pos/\val in {1/1,2/6,3/4,4/9,5/5,6/8,7/2,8/3,9/7} { \fill (\val-0.5,\pos-0.5) circle (0.15); }
  \node[above] at (4.5,9) {$\widetilde v_\alpha$};
\end{tikzpicture}
\qquad\qquad
\begin{tikzpicture}[x=0.34cm,y=0.34cm]
  \fill[gray!20] (0,0) rectangle (3,9);
  \fill[gray!20] (5,0) rectangle (7,9);
  \draw (0,0) rectangle (9,9);
  \draw[thick] (0,6) -- (9,6);
  \foreach \pos in {3,4,5,7,8} { \draw[thick] (\pos,0) -- (\pos,9); }
  \foreach \pos in {1,...,9} { \node[below] at (\pos-0.5,0) {\tiny \pos}; \node[left] at (0,\pos-0.5) {\tiny \pos}; }
  \foreach \pos/\val in {1/1,2/9,3/6,4/5,5/4,6/8,7/2,8/3,9/7} { \fill (\val-0.5,\pos-0.5) circle (0.15); }
  \node[above] at (4.5,9) {$\widetilde w_\alpha^\circ$};
\end{tikzpicture}
\qquad\qquad
\begin{tikzpicture}[x=0.34cm,y=0.34cm]
  \fill[gray!20] (0,0) rectangle (3,9);
  \fill[gray!20] (5,0) rectangle (7,9);
  \draw (0,0) rectangle (9,9);
  \draw[thick] (0,6) -- (9,6);
  \foreach \pos in {3,4,5,7,8} { \draw[thick] (\pos,0) -- (\pos,9); }
  \foreach \pos in {1,...,9} { \node[below] at (\pos-0.5,0) {\tiny \pos}; \node[left] at (0,\pos-0.5) {\tiny \pos}; }
  \foreach \pos/\val in {1/3,2/9,3/7,4/5,5/4,6/8,7/1,8/2,9/6} { \fill (\val-0.5,\pos-0.5) circle (0.15); }
  \node[above] at (4.5,9) {$\widetilde w_\alpha$};
\end{tikzpicture}
\caption{The left, middle, and right diagrams show
\(\widetilde v_\alpha\), \(\widetilde w_\alpha^\circ\), and
\(\widetilde w_\alpha\) for \(\alpha=(2,0,0,1,0,0)\), with a dot in
row \(i\) and column \(j\) when the permutation sends \(i\) to \(j\).
Thick vertical lines divide
the columns into the value blocks, the two blocks with more than one
element are shaded, and a thick horizontal line separates the rows
\(1,\dots,6\) from the appended rows \(7,8,9\) above them.  In the rows
\(1,\dots,6\), each dot lies in the leftmost column of its block in the
left and middle diagrams and in the rightmost column of its block in
the right diagram.}
\label{fig:perms}
\end{figure}


\section{Proof of \texorpdfstring{\cref{thm:main}}{Theorem 1.2} and consequences}
\label{sec:main}

Throughout this section, let \(u,v,w\in S_n\), and let
\(d=\length(u)+\length(v)-\length(w)\).  For \(\alpha\in\NN^n\) we put
\(r=\abs{\alpha}\) and \(N=n+r\), and we denote by
\((\widetilde u_\alpha,\widetilde v_\alpha,\widetilde w_\alpha)\in S_N^3\) the block
lift of \((u,v,w,\alpha)\) (\cref{def:lifts}).  Since
\(\SSch_u(\xx;\yy)\) is homogeneous of degree \(\length(u)\)
and \(\partial_{w/v}\) lowers degrees by
\(\length(w)-\length(v)\), the polynomial
\(\partial_{w/v}\SSch_u(\xx;\yy)\) is homogeneous of total degree \(d\)
in the variables \(x_i\) and \(y_j\), or zero.  In particular, it is
zero when \(d<0\).

\subsection{Proof of \texorpdfstring{\cref{thm:main}}{Theorem 1.2}}

\begin{proof}
Fix \(\alpha\in\NN^n\).  By \cref{prop:skew-triple}, the polynomial
\(\partial_{w/v}\SSch_u(\xx;\yy)\) lies in \(\ZZ[\yy][x_1,\dots,x_n]\),
and we have
\begin{align*}
c^{\widetilde w_\alpha}_{\widetilde u_\alpha,\widetilde v_\alpha}(\yy,0)
&=\bigl(\partial_{\widetilde w_\alpha/\widetilde v_\alpha}
\SSch_{\widetilde u_\alpha}(\xx;\yy)\bigr)\big|_{\xx=0}
=\bigl(\partial_{\widetilde w_\alpha/\widetilde v_\alpha}
\SSch_{u}(\xx;\yy)\bigr)\big|_{\xx=0}\\
&=\bigl(\partial_{\rho_\alpha}\sigma_\alpha\partial_{w/v}
\SSch_{u}(\xx;\yy)\bigr)\big|_{\xx=0}
=[\xx^\alpha]\,\partial_{w/v}\SSch_u(\xx;\yy),
\end{align*}
where the four equalities are \cref{prop:skew-triple} for the block
lift in \(S_N\) at \(\xx=0\), the stability \ref{S:stable},
\cref{thm:operator} with \(A=\ZZ[\yy]\), and \cref{prop:extraction}
with \(A=\ZZ[\yy]\).
Since \(\partial_{w/v}\SSch_u(\xx;\yy)\) is a polynomial in
\(x_1,\dots,x_n\) over \(\ZZ[\yy]\), it is the sum of the terms
\(\bigl([\xx^\alpha]\,\partial_{w/v}\SSch_u(\xx;\yy)\bigr)\,\xx^\alpha\)
over \(\alpha\in\NN^n\).  The terms with \(\abs{\alpha}>d\) vanish,
because this polynomial is homogeneous of degree \(d\) or zero.
Substituting the chain above into the remaining terms, we obtain the
desired identity \eqref{eq:main}.
\end{proof}

Renaming the alphabet \(\xx\) as \(\tb\) in \eqref{eq:main} and
applying \eqref{eq:skew-triple} to the left side, we obtain the
equivalent identity
\begin{equation}\label{eq:main-triple}
c^{w}_{u,v}(\yy,\tb)
=\sum_{\substack{\alpha\in\NN^n\\ \abs{\alpha}\le d}}
c^{\widetilde w_\alpha}_{\widetilde u_\alpha,\widetilde v_\alpha}(\yy,0)\,\tb^\alpha.
\end{equation}

The block lift depends on \(n\) and not only on \(u\), \(v\), \(w\),
and the monomial \(\xx^\alpha\), since it is not stable under the
inclusion of \(S_n\) in \(S_{n+1}\).  If \(n\) is replaced by \(n+1\)
and \(\alpha\) by \((\alpha,0)\in\NN^{n+1}\), which gives the same
monomial, then the block placement
\(\sigma_{(\alpha,0)}\in S_{N+1}\) sends the position \(n+1\) to the new
base value \(N+1\).  Thus for \(r>0\) the new block lift differs from
\((\widetilde u_\alpha\times1_1,\widetilde v_\alpha\times1_1,\widetilde w_\alpha\times1_1)\),
although \eqref{eq:main} holds for both \(n\) and \(n+1\).

\subsection{Consequences}
\label{sec:consequences}

Setting \(\yy=0\) in \eqref{eq:main}, we obtain \cref{conj:kirillov} in the
following form.

\begin{cor}\label{cor:top}
We have
\[
\partial_{w/v}\SSch_u(\xx)
=\sum_{\substack{\alpha\in\NN^n\\ \abs{\alpha}=d}}
c^{\widetilde w_\alpha}_{\widetilde u_\alpha,\widetilde v_\alpha}\,\xx^\alpha.
\]
In particular, \(\partial_{w/v}\SSch_u(\xx)\) has nonnegative integer
coefficients.
\end{cor}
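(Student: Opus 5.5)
The plan is to specialize \cref{thm:main} at \(\yy=0\) and then use homogeneity to discard the terms with \(\abs{\alpha}<d\).

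First, apply the ring homomorphism \(\ZZ[\yy]\to\ZZ\), \(\yy\mapsto0\), to both sides of \eqref{eq:main}. By \ref{it:skew-degree}, \(\partial_{w/v}\) commutes with this specialization. Since \(\SSch_u(\xx;0)=\SSch_u(\xx)\), the left side becomes \(\partial_{w/v}\SSch_u(\xx)\). On the right side, each coefficient becomes \(c^{\widetilde w_\alpha}_{\widetilde u_\alpha,\widetilde v_\alpha}(0,0)\), which is the ordinary structure constant \(c^{\widetilde w_\alpha}_{\widetilde u_\alpha,\widetilde v_\alpha}\) by the discussion in \cref{sec:triple}.

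Second, show that only the terms with \(\abs{\alpha}=d\) survive. The polynomial \(\partial_{w/v}\SSch_u(\xx;\yy)\) is homogeneous of total degree \(d\) in \(\xx\) and \(\yy\), or zero. Hence its coefficient of \(\xx^\alpha\) is homogeneous of degree \(d-\abs{\alpha}\) in \(\yy\), or zero. When \(\abs{\alpha}<d\), this coefficient has positive degree in \(\yy\) and so vanishes at \(\yy=0\). One can also see this from the triple coefficient directly. By the homogeneity statement in \cref{sec:triple} together with \eqref{eq:lengths}, the coefficient \(c^{\widetilde w_\alpha}_{\widetilde u_\alpha,\widetilde v_\alpha}(\yy,0)\) has degree \(\length(u)+\length(v)+\length(\sigma_\alpha)-\length(w)-\length(\sigma_\alpha)-r=d-\abs{\alpha}\). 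This gives the displayed identity.

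Finally, nonnegativity follows because the structure constants \(c^{w}_{u,v}\) are nonnegative integers, as recalled in the introduction (they are intersection numbers). No step here is a real obstacle. The only point needing care is that setting \(\yy=0\) commutes with \(\partial_{w/v}\) and with taking coefficients of \(\xx^\alpha\), and \ref{it:skew-degree} provides this.
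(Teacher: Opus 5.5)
Your proposal is correct and follows the paper's proof: set \(\yy=0\) in \eqref{eq:main} using \ref{it:skew-degree} and \(c^{\widetilde w_\alpha}_{\widetilde u_\alpha,\widetilde v_\alpha}(0,0)=c^{\widetilde w_\alpha}_{\widetilde u_\alpha,\widetilde v_\alpha}\), discard the terms with \(\abs{\alpha}<d\) by homogeneity, and conclude with the nonnegativity of structure constants. Your homogeneity step, via the \(\yy\)-degree \(d-\abs{\alpha}\) of each coefficient (or via \eqref{eq:lengths}), is only a slight rephrasing of the paper's remark that \(\partial_{w/v}\SSch_u(\xx)\) is homogeneous of degree \(d\) or zero.
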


\begin{proof}
We set \(\yy=0\) in \eqref{eq:main}.  The left side becomes
\(\partial_{w/v}\SSch_u(\xx)\), since this substitution commutes with
\(\partial_{w/v}\) by \ref{it:skew-degree}.
On the right side, each coefficient becomes
\(c^{\widetilde w_\alpha}_{\widetilde u_\alpha,\widetilde v_\alpha}(0,0)
=c^{\widetilde w_\alpha}_{\widetilde u_\alpha,\widetilde v_\alpha}\).
Since \(\partial_{w/v}\SSch_u(\xx)\) is
homogeneous of degree \(d\) or zero, the terms with \(\abs{\alpha}<d\)
vanish.  Finally, the structure constants are nonnegative integers,
which proves the last assertion.
\end{proof}

We now return to the two special cases of \cref{conj:kirillov}
discussed in \cref{sec:intro}.  The first is the case \(d=0\) of the
following corollary, which concerns the term \(\alpha=0\) of
\eqref{eq:main}.

\begin{cor}\label{cor:zero}
The block lift of \((u,v,w,0)\) is \((u,v,w)\), and
\begin{equation}\label{eq:zero-slice}
\partial_{w/v}\SSch_u(\xx;\yy)\big|_{\xx=0}=c^{w}_{u,v}(\yy,0).
\end{equation}
In particular,
\(\partial_{w/v}\SSch_u(\xx)\big|_{\xx=0}=c^{w}_{u,v}\), and if
\(d=0\), then \(\partial_{w/v}\SSch_u(\xx;\yy)=c^{w}_{u,v}\).
\end{cor}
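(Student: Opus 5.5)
The plan is to specialize \cref{thm:main} at \(\alpha=0\) and then read off the constant term and the case \(d=0\). First I will check that the block lift of \((u,v,w,0)\) is \((u,v,w)\). When \(\alpha=0\) we have \(r=0\) and \(N=n\). By \eqref{eq:blocks} every base value is \(b_i=i\) and every value block is \(B_i=\{i\}\). By \cref{def:blocks} the block placement is \(\sigma_0=\cmpl{1,\dots,n}=\id\), and every block cycle \(\rho_i\) is \(\id\), so \(\rho_0=\id\). Substituting into \eqref{eq:lifts}, or into \cref{lem:explicit} with \(b_i+\alpha_i=i\), gives
\[
\widetilde u_0=u,\qquad \widetilde v_0=v,\qquad \widetilde w_0=w.
\]

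For \eqref{eq:zero-slice}, I will take the coefficient of \(\xx^0\), that is, evaluate at \(\xx=0\). By \cref{prop:skew-triple}, \(\partial_{w/v}\SSch_u(\xx;\yy)\) is a polynomial in \(x_1,\dots,x_n\) over \(\ZZ[\yy]\), so its value at \(\xx=0\) is its \([\xx^0]\)-coefficient. By \eqref{eq:main}, this coefficient is \(c^{w}_{u,v}(\yy,0)\), using the first part for the indices. One caveat concerns the case \(d<0\), where the term \(\alpha=0\) is excluded from the sum in \eqref{eq:main}. There the left side is zero by homogeneity, and \(c^w_{u,v}(\yy,0)\) is also zero, because the triple coefficient is homogeneous of degree \(d\) or zero (\cref{sec:triple}). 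A more direct route, which avoids this case split, is to evaluate \eqref{eq:skew-triple} at \(\xx=0\), which gives \eqref{eq:zero-slice} at once. I will mention both.

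For the remaining claims, set \(\yy=0\) in \eqref{eq:zero-slice}. By \ref{it:skew-degree}, this substitution commutes with \(\partial_{w/v}\), and \(c^w_{u,v}(0,0)=c^w_{u,v}\), so \(\partial_{w/v}\SSch_u(\xx)\big|_{\xx=0}=c^w_{u,v}\). Now suppose \(d=0\). Then \(\partial_{w/v}\SSch_u(\xx;\yy)\) is homogeneous of total degree \(0\) in the \(x_i\) and \(y_j\), or zero, so it is an integer constant. It therefore equals its value at \(\xx=\yy=0\), which is \(c^w_{u,v}\).

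No step here is a real obstacle. The only points needing care are the degenerate case \(d<0\) and the remark that evaluation at \(\xx=0\) is the same as taking the \(\alpha=0\) coefficient.
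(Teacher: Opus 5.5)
Your proposal is correct and follows the paper's proof essentially step for step. The paper likewise observes \(b_i=i\), so that \(\sigma_0=\rho_0=\id\) and the block lift is \((u,v,w)\). It obtains \eqref{eq:zero-slice} both as \eqref{eq:skew-triple} at \(\xx=0\) and as the term \(\alpha=0\) of \eqref{eq:main}, sets \(\yy=0\) via \ref{it:skew-degree}, and uses homogeneity for \(d=0\). Your caveat about \(d<0\) is a correct extra bit of care, and it is harmless since the direct route through \eqref{eq:skew-triple} already covers that case.
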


\begin{proof}
For \(\alpha=0\) we have \(r=0\), \(N=n\), and \(b_i=i\) for all
\(i\), so that \(\sigma_\alpha=\id\) and \(\rho_\alpha=\id\), and
\eqref{eq:lifts} gives the block lift \((u,v,w)\).  Identity
\eqref{eq:zero-slice} is \eqref{eq:skew-triple} at \(\xx=0\), and it
is also the term \(\alpha=0\).  As in the proof of
\cref{cor:top}, setting \(\yy=0\) commutes with \(\partial_{w/v}\),
and \(c^{w}_{u,v}(0,0)=c^{w}_{u,v}\), so \eqref{eq:zero-slice} at \(\yy=0\) gives the
second identity.  If \(d=0\), then \(\partial_{w/v}\SSch_u(\xx;\yy)\)
is a constant by homogeneity, and this constant is its value at
\(\xx=0\) and \(\yy=0\), which is \(c^{w}_{u,v}\), as desired.
\end{proof}

The identity
\(\partial_{w/v}\SSch_u(\xx)\big|_{\xx=0}=c^{w}_{u,v}\) of
\cref{cor:zero} was stated by Kirillov~\cite[(1.3)]{Kirillov2007}.

The second special case where \( v=\id \) concerns Schubert polynomials, since
\(\partial_{w/\id}\SSch_u(\xx)\) is the Schubert polynomial
\(\SSch_{uw^{-1}}(\xx)\) or zero.  For \(w\ne\id\), the lower index
\(\widetilde u_\alpha\) in \cref{thm:main} lifts \(u\) rather than
\(uw^{-1}\), and so the coefficients of \(\SSch_u(\xx)\) itself
correspond to the case \(v=w=\id\).  More generally, if \(v=w\), then
\(\partial_{v/v}=v\) by \ref{it:skew-ends}, so the coefficients of
\(\partial_{v/v}\SSch_u(\xx)\) are those of \(\SSch_u(\xx)\) with the
variables permuted by \(v\), and \cref{thm:main} gives the following.

\begin{cor}\label{cor:vw}
Suppose that \(w=v\). Let \(\alpha\in\NN^n\), and define
\(\gamma\in\NN^n\) by \(\gamma_i=\alpha_{v(i)}\) for \(i\in[n]\).  Then
\begin{equation}\label{eq:vw}
[\xx^\alpha]\,v\left(\SSch_u(\xx;\yy)\right)
=[\xx^\gamma]\,\SSch_u(\xx;\yy)
=c^{\widetilde w_\alpha}_{\widetilde u_\alpha,\widetilde v_\alpha}(\yy,0).
\end{equation}
Moreover, if \(\abs{\alpha}=\length(u)\), then
\([\xx^\gamma]\,\SSch_u(\xx)=c^{\widetilde w_\alpha}_{\widetilde u_\alpha,\widetilde v_\alpha}\).
\end{cor}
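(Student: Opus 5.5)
The plan is to read \eqref{eq:vw} off \cref{thm:main} in the case \(w=v\), after recording how a permutation of the variables relabels coefficients.

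First, by \ref{it:skew-ends} we have \(\partial_{v/v}=v\), so \(\partial_{w/v}\SSch_u(\xx;\yy)=v\bigl(\SSch_u(\xx;\yy)\bigr)\). By \cref{thm:main}, or directly by the chain of equalities in its proof, which holds for every \(\alpha\in\NN^n\) (the coefficient vanishes for \(\abs{\alpha}>d\)), we get
\[
[\xx^\alpha]\,v\bigl(\SSch_u(\xx;\yy)\bigr)=c^{\widetilde w_\alpha}_{\widetilde u_\alpha,\widetilde v_\alpha}(\yy,0).
\]
This is the second equality of \eqref{eq:vw}.

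For the first equality, we compute the action of \(v\) on a monomial using \eqref{eq:action}. It gives \(v(\xx^\beta)=\prod_i x_{v(i)}^{\beta_i}\), so the exponent of \(x_j\) in \(v(\xx^\beta)\) is \(\beta_{v^{-1}(j)}\). Since \(v\) permutes monomials bijectively and fixes coefficients in \(\ZZ[\yy]\), the coefficient of \(\xx^\alpha\) in \(v(f)\) is the coefficient of \(\xx^\beta\) in \(f\), where \(\beta_{v^{-1}(j)}=\alpha_j\). Equivalently, \(\beta_i=\alpha_{v(i)}=\gamma_i\), which proves the first equality. Here we also use that \(\SSch_u(\xx;\yy)\) involves only \(x_1,\dots,x_n\), by \ref{S:hom}, so that \(v\in S_n\) permutes exactly these variables.

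For the last assertion, assume \(\abs{\alpha}=\length(u)\). Since \(w=v\), we have \(d=\length(u)\). We set \(\yy=0\) in \eqref{eq:vw}, which is legitimate because the substitution commutes with \(v\). The left side becomes \([\xx^\gamma]\,\SSch_u(\xx)\), and the right side becomes \(c^{\widetilde w_\alpha}_{\widetilde u_\alpha,\widetilde v_\alpha}(0,0)\), the ordinary structure constant. The hypothesis \(\abs{\alpha}=\length(u)\) is not strictly needed for the equality, but it is the case in which both sides can be nonzero, by homogeneity; this matches \cref{cor:top}.

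The only point requiring care is the index bookkeeping in the relabeling, that is, getting \(\gamma_i=\alpha_{v(i)}\) rather than \(\alpha_{v^{-1}(i)}\). This is settled by the explicit computation of \(v(\xx^\beta)\) above.
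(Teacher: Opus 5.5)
Your proposal is correct and follows the paper's proof. Both arguments use \(\partial_{v/v}=v\) together with the chain of equalities from the proof of \cref{thm:main} (valid for every \(\alpha\)) for the outer equality, the relabeling \(v(\xx^\gamma)=\xx^\alpha\) via \eqref{eq:action} for the first equality, and the specialization \(\yy=0\) for the last assertion. The paper phrases that last step as an appeal to \cref{cor:top} with \(w=v\), which is itself just this specialization, and your remark that the equality holds for all \(\alpha\) (both sides vanishing unless \(\abs{\alpha}=\length(u)\)) is also right.
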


\begin{proof}
By \ref{it:skew-ends}, \(\partial_{v/v}=v\), so the chain of equalities
in the proof of \cref{thm:main} with \(w=v\) gives the equality of the
outer terms of \eqref{eq:vw}.  By \eqref{eq:action}, the permutation
\(v\) sends \(\xx^\gamma\) to
\(\prod_ix_{v(i)}^{\gamma_i}=\xx^\alpha\), so
\([\xx^\alpha]\,v(f)=[\xx^\gamma]f\) for every polynomial \(f\), which
gives the first equality.  If \(\abs{\alpha}=\length(u)\), then
\(\abs{\alpha}=d\), and \cref{cor:top} with \(w=v\) gives the last
assertion.
\end{proof}

For \(v=\id\) we have \(\gamma=\alpha\), and every monomial of
\(\SSch_u(\xx)\) has degree \(\length(u)\), so the last assertion of
\cref{cor:vw} realizes every coefficient of \(\SSch_u(\xx)\) as a single
structure constant.  We compare this realization with that of Lenart,
Robinson, and Sottile in \cref{sec:lrs}.

For \(\beta\in\NN^n\), we write \([\xx^\alpha\yy^\beta]G\) for the
coefficient of \(\xx^\alpha\yy^\beta\) in a polynomial \(G\).  By the
Cauchy formula~\cite[Chapter~VI, (6.3)]{Macdonald1991}, we have
\[
\SSch_u(\xx;\yy)=\sum_{u_1,u_2}\SSch_{u_1}(\xx)\,\SSch_{u_2}(-\yy),
\]
summed over \(u_1,u_2\in S_n\) with \(u=u_2^{-1}u_1\) and
\(\length(u)=\length(u_1)+\length(u_2)\).  Since \(\partial_{w/v}\) acts
only on \(\xx\), it follows that
\[
(-1)^{\abs{\beta}}\,[\xx^\alpha\yy^\beta]\,\partial_{w/v}\SSch_u(\xx;\yy)
=\sum_{u_1,u_2}\bigl([\xx^\alpha]\,\partial_{w/v}\SSch_{u_1}(\xx)\bigr)
\bigl([\yy^\beta]\,\SSch_{u_2}(\yy)\bigr).
\]
By \cref{cor:top,cor:vw}, each factor on the right is zero or a single
structure constant.  Thus this integer is a sum of products of
structure constants, and in particular it is nonnegative.  In fact,
since \(\widetilde v_\alpha\) and \(\widetilde w_\alpha\) do not depend
on \(u\), comparing this expansion with the Cauchy formula for
\(\SSch_{\widetilde u_\alpha}(\xx;\yy)\) shows that \cref{thm:main} also
follows from \cref{cor:top} applied to every \(u_1\).  Gao and
Xiong~\cite[Theorem~1.1]{GX2025} proved more strongly that
\(c^{w}_{u,v}(\yy,\tb)\) is a polynomial in the differences
\(t_i-y_j\) with nonnegative integer coefficients.  Chen, Fan, Xiong,
and Yao~\cite[Theorem~2.2]{CFXY2026} used this theorem and the Cauchy
formula to show that the nonvanishing of the triple Schubert
coefficients is determined by that of the ordinary structure
constants.

By \cref{thm:main}, on the other hand, the same integer is
\((-1)^{\abs{\beta}}\) times the
coefficient of \(\yy^\beta\) in the single triple Schubert coefficient
\(c^{\widetilde w_\alpha}_{\widetilde u_\alpha,\widetilde v_\alpha}(\yy,0)\).
This suggests the following question.

\begin{ques}\label{ques:signed}
Let \(\alpha,\beta\in\NN^n\).  Can the nonnegative integer
\[
(-1)^{\abs{\beta}}\,[\xx^\alpha\yy^\beta]\,\partial_{w/v}\SSch_u(\xx;\yy)
\]
be described as a single ordinary Schubert structure constant,
indexed by an explicit construction from \((u,v,w,\alpha,\beta)\)?
\end{ques}

\subsection{Comparison with the work of Lenart--Robinson--Sottile}
\label{sec:lrs}

The general substitution formula of Lenart, Robinson, and
Sottile~\cite[Theorem~9.1 and Remark~9.8]{LRS2006} distributes the
variables of a Schubert polynomial among several alphabets and expands
the result in
products of Schubert polynomials in these alphabets, with single
structure constants as coefficients.  When each alphabet receives a
single variable, their formula gives the following in our notation.
Let \(u,v\in S_n\) and \(\mu\in\NN^n\), let \(b_1,\dots,b_n\) be the base
values of \(\mu\), and put \(r=\abs{\mu}\), so that the increasing
completions below lie in \(S_{n+r}\).  Suppose that for each \(i\), the
polynomial \(\SSch_u(\xx)\) has degree at most \(\mu_i\) in
\(x_{v^{-1}(i)}\).  Then for every \(\gamma\in\NN^n\) with
\(\gamma_{v^{-1}(i)}\le\mu_i\) for all \(i\), we have
\begin{equation}\label{eq:lrs}
[\xx^\gamma]\,\SSch_u(\xx)
=c^{\cmpl{b_{v(1)}+\gamma_1,\,\dots,\,b_{v(n)}+\gamma_n}}
_{u\times1_r,\,\cmpl{b_{v(1)},\,\dots,\,b_{v(n)}}}.
\end{equation}

The case \(v=\id\) and \(\mu=(n-1,\dots,1,0)\) is their
Corollary~9.10.  In this case the degree condition holds for every
\(u\in S_n\), and the lower index \(\cmpl{b_1,\dots,b_n}\) is a
Grassmannian permutation that is the same for all
monomials~\cite[Lemma~9.6]{LRS2006}.  Earlier, Kirillov and
Maeno~\cite[(20)]{KM2000} and Bergeron and
Sottile~\cite[Corollary~5.5]{BS2002} showed that each coefficient of
\(\SSch_u(\xx)\) is an intersection number of a Schubert class with a
product of special Schubert classes.

The degree condition is needed for the identity of polynomials in their
Theorem~9.1, but not for \eqref{eq:lrs} itself.  Their formula comes
from an embedding of \(\mathrm{Fl}_{\mu_1+1}\times\cdots\times\mathrm{Fl}_{\mu_n+1}\)
into \(\mathrm{Fl}_{n+r}\), where \(\mathrm{Fl}_m\) is the variety of
complete flags in \(\mathbb C^m\).  Adeyemo and
Sottile~\cite[Theorems~2.5 and~2.7]{AS2014} showed that the pullback
along this embedding sends \(x_{v^{-1}(i)}\) to the first Chern root of
the \(i\)th factor, and that it expands the Schubert class of
\(u\times1_r\) in the Schubert basis of the product with coefficients
as on the right side of \eqref{eq:lrs}.  The
powers of this root with exponents at most \(\mu_i\) are distinct
Schubert classes, and its \((\mu_i+1)\)st power is zero.  Comparing
coefficients, we obtain \eqref{eq:lrs} for all \(u\in S_n\) and
\(\mu\in\NN^n\), without the degree condition.

By \cref{lem:explicit}, the last assertion of \cref{cor:vw} is
\eqref{eq:lrs} for \(\mu=\alpha\), the smallest \(\mu\) allowed by
\(\gamma\).  For this choice the degree condition fails in general, so
the argument of the previous paragraph is needed.  With the Cauchy
formula, as in \cref{sec:consequences}, this also gives
\eqref{eq:vw}.  Thus \cref{cor:vw} can be deduced from these results,
whereas our proof uses only divided differences.  Compared with their
Corollary~9.10, our realization uses permutations in
\(S_{n+\length(u)}\) rather than in \(S_{n(n+1)/2}\), and its lower
index depends on the monomial.

Suppose now that \(v<w\) and \(\partial_{w/v}\SSch_u(\xx)\ne0\).  This
polynomial is homogeneous of degree
\(\length(u)+\length(v)-\length(w)<\length(u)\), so it is not obtained
from \(\SSch_u(\xx)\) by a substitution of variables, and
\eqref{eq:lrs} for \(u\) does not apply to it.  Nevertheless,
\cref{cor:top} realizes its coefficients with the lower index
\(\widetilde u_\alpha=u\times1_r\).  For \(v=\id\), for example, the
polynomial \(\partial_w\SSch_u(\xx)\) is \(\SSch_{uw^{-1}}(\xx)\), and
\eqref{eq:lrs} realizes its coefficients with \(uw^{-1}\) in place of
\(u\).

\subsection{The running example completed}

\begin{exam}\label{ex:slices}
Take the same setting as in \cref{ex:F}, where \(u=435126\),
\(v=142635\), \(w=164325\), \(d=4\), and
\[
F=\partial_{w/v}\SSch_u(\xx;\yy)
=(x_1^3-e_1x_1^2+e_2x_1-e_3)(x_4+x_6-y_1-y_2).
\]
For \(\alpha=(2,0,0,1,0,0)\), whose block lift is computed in
\cref{ex:lift}, \cref{thm:main} asserts that
\[
[x_1^2x_4]\,F=c^{397548126}_{435126789,\,164958237}(\yy,0),
\]
and both sides are equal to \(-y_1-y_2-y_3\).  The left side was read
off from \(F\) in \cref{ex:F}, and \cref{ex:extraction} recovered it
with the operator \(\partial_2\partial_1\partial_6\sigma_\alpha\) of
\eqref{eq:ex-identity}.  By \eqref{eq:main-triple}, it is also the
coefficient of \(t_1^2t_4\) in \eqref{eq:ex-triple}.
\Cref{tab:slices} lists six of the twelve exponents \(\alpha\) of the
monomials of \(F\) with the permutations \(\widetilde v_\alpha\) and
\(\widetilde w_\alpha\) of their block lifts.  For \(\alpha=0\) the block
lift is \((u,v,w)\), and the constant term \(e_3(y_1+y_2)\) of \(F\) is
\(c^{w}_{u,v}(\yy,0)\), as in \cref{cor:zero}.  For the two exponents
with \(\abs{\alpha}=d=4\), the values are the structure constants
\(c^{\widetilde w_\alpha}_{\widetilde u_\alpha,\widetilde v_\alpha}=1\) in
\(S_{10}\), the two coefficients of
\(\partial_{w/v}\SSch_u(\xx)=x_1^3x_4+x_1^3x_6\) (\cref{cor:top}).
Summing over all twelve exponents, \eqref{eq:main-triple} recovers
\eqref{eq:ex-triple} as
\[
c^{164325}_{435126,\,142635}(\yy,\tb)
=\sum_{\alpha}c^{\widetilde w_\alpha}_{\widetilde u_\alpha,\widetilde v_\alpha}(\yy,0)\,\tb^\alpha
=(t_1-y_1)(t_1-y_2)(t_1-y_3)(t_4+t_6-y_1-y_2).
\]
\end{exam}

\begin{table}[ht]
\centering
\small
\begin{tabular}{cllllc}
\hline
\(\alpha\) & \(\widetilde v_\alpha\) & \(\widetilde w_\alpha\) &
\(\length(\widetilde v_\alpha)\) & \(\length(\widetilde w_\alpha)\) & \([\xx^\alpha]F=c^{\widetilde w_\alpha}_{\widetilde u_\alpha,\widetilde v_\alpha}(\yy,0)\)\\
\hline
\((0,0,0,0,0,0)\) & \(1\,4\,2\,6\,3\,5\) & \(1\,6\,4\,3\,2\,5\) & \(4\) & \(7\) & \(y_1^2y_2y_3+y_1y_2^2y_3\)\\
\((1,0,0,1,0,0)\) & \(1\,5\,3\,8\,4\,7\mid2\,6\) & \(2\,8\,6\,4\,3\,7\mid1\,5\) & \(11\) & \(16\) & \(y_1y_2+y_1y_3+y_2y_3\)\\
\((2,0,0,1,0,0)\) & \(1\,6\,4\,9\,5\,8\mid2\,3\,7\) & \(3\,9\,7\,5\,4\,8\mid1\,2\,6\) & \(16\) & \(22\) & \(-y_1-y_2-y_3\)\\
\((2,0,0,0,0,1)\) & \(1\,6\,4\,8\,5\,7\mid2\,3\,9\) & \(3\,9\,6\,5\,4\,7\mid1\,2\,8\) & \(14\) & \(20\) & \(-y_1-y_2-y_3\)\\
\((3,0,0,1,0,0)\) & \(1\;7\;5\;10\;6\;9\mid2\;3\;4\;8\) & \(4\;10\;8\;6\;5\;9\mid1\;2\;3\;7\) & \(21\) & \(28\) & \(1\)\\
\((3,0,0,0,0,1)\) & \(1\;7\;5\;9\;6\;8\mid2\;3\;4\;10\) & \(4\;10\;7\;6\;5\;8\mid1\;2\;3\;9\) & \(19\) & \(26\) & \(1\)\\
\hline
\end{tabular}
\caption{For six weak compositions \(\alpha\), the table gives the
lifted permutations \(\widetilde v_\alpha\) and \(\widetilde w_\alpha\)
in \(S_N\) with \(N=6+\abs{\alpha}\), their lengths, and the common
value of \([\xx^\alpha]F\) and
\(c^{\widetilde w_\alpha}_{\widetilde u_\alpha,\widetilde v_\alpha}(\yy,0)\).}
\label{tab:slices}
\end{table}

\section*{}

\subsection*{Use of AI}
The author used GPT 5.6 Sol through Codex in the course of this work,
mainly to work out the author's ideas in detail, to verify results by
computation, and to find connections with the literature.  The author
also used Claude Opus 5.5 in preparing the manuscript.  Starting from an
earlier version written by the author, it checked the grammar, polished
the wording, and checked the citations.  The author takes full
responsibility for the contents of the paper.

\subsection*{Acknowledgments}
The author was supported by a KIAS Individual Grant (AP098201) via the Center for Artificial Intelligence and Natural Sciences at Korea Institute for Advanced Study.


\begin{thebibliography}{CFXY26}

\bibitem[AS14]{AS2014}
Praise Adeyemo and Frank Sottile.
\newblock Cohomological consequences of the pattern map.
\newblock {\em Illinois J. Math.}, 58(4):997--1008, 2014.
\newblock \url{https://doi.org/10.1215/ijm/1446819296}.

\bibitem[BB05]{BB2005}
Anders Bj{\"o}rner and Francesco Brenti.
\newblock {\em Combinatorics of {C}oxeter Groups}, volume 231 of {\em Graduate Texts in Mathematics}.
\newblock Springer, New York, 2005.
\newblock \url{https://doi.org/10.1007/3-540-27596-7}.

\bibitem[BGG73]{BGG1973}
I.~N. Bernstein, I.~M. Gel'fand, and S.~I. Gel'fand.
\newblock Schubert cells and cohomology of the spaces {$G/P$}.
\newblock {\em Russian Math. Surveys}, 28(3):1--26, 1973.
\newblock \url{https://doi.org/10.1070/RM1973v028n03ABEH001557}.

\bibitem[BJS93]{BJS1993}
Sara~C. Billey, William Jockusch, and Richard~P. Stanley.
\newblock Some combinatorial properties of {S}chubert polynomials.
\newblock {\em J. Algebraic Combin.}, 2(4):345--374, 1993.
\newblock \url{https://doi.org/10.1023/A:1022419800503}.

\bibitem[BS02]{BS2002}
Nantel Bergeron and Frank Sottile.
\newblock Skew {S}chubert functions and the {P}ieri formula for flag manifolds.
\newblock {\em Trans. Amer. Math. Soc.}, 354(2):651--673, 2002.
\newblock \url{https://doi.org/10.1090/S0002-9947-01-02845-8}.

\bibitem[CFXY26]{CFXY2026}
Yiming Chen, Neil J.~Y. Fan, Rui Xiong, and Ming Yao.
\newblock Non-vanishing of single, double, and triple {S}chubert structure constants.
\newblock Preprint, 2026.
\newblock \url{https://arxiv.org/abs/2608.17378v1}.

\bibitem[Dem74]{Demazure1974}
Michel Demazure.
\newblock D{\'e}singularisation des vari{\'e}t{\'e}s de {S}chubert g{\'e}n{\'e}ralis{\'e}es.
\newblock {\em Ann. Sci. {\'E}cole Norm. Sup. (4)}, 7(1):53--88, 1974.
\newblock \url{https://doi.org/10.24033/asens.1261}.

\bibitem[FGX25]{FGX2025}
Neil J.~Y. Fan, Peter~L. Guo, and Rui Xiong.
\newblock Bumpless pipe dreams meet puzzles.
\newblock {\em Adv. Math.}, 463, 2025.
\newblock Paper No. 110113, \url{https://doi.org/10.1016/j.aim.2025.110113}.

\bibitem[FS94]{FS1994}
Sergey Fomin and Richard~P. Stanley.
\newblock Schubert polynomials and the nil{C}oxeter algebra.
\newblock {\em Adv. Math.}, 103(2):196--207, 1994.
\newblock \url{https://doi.org/10.1006/aima.1994.1009}.

\bibitem[Gra01]{Graham2001}
William Graham.
\newblock Positivity in equivariant {S}chubert calculus.
\newblock {\em Duke Math. J.}, 109(3):599--614, 2001.
\newblock \url{https://doi.org/10.1215/S0012-7094-01-10935-6}.

\bibitem[GX25]{GX2025}
Yibo Gao and Rui Xiong.
\newblock Graham positivity of triple {S}chubert calculus.
\newblock Preprint, 2025.
\newblock \url{https://arxiv.org/abs/2506.09421v1}.

\bibitem[Kir07]{Kirillov2007}
Anatol~N. Kirillov.
\newblock Skew divided difference operators and {S}chubert polynomials.
\newblock {\em SIGMA Symmetry Integrability Geom. Methods Appl.}, 3, 2007.
\newblock Paper No. 072, \url{https://doi.org/10.3842/SIGMA.2007.072}.

\bibitem[KM00]{KM2000}
Anatol~N. Kirillov and Toshiaki Maeno.
\newblock Quantum double {S}chubert polynomials, quantum {S}chubert polynomials and {V}afa-{I}ntriligator formula.
\newblock {\em Discrete Math.}, 217(1-3):191--223, 2000.
\newblock \url{https://doi.org/10.1016/S0012-365X(99)00263-0}.

\bibitem[KT03]{KT2003}
Allen Knutson and Terence Tao.
\newblock Puzzles and (equivariant) cohomology of {G}rassmannians.
\newblock {\em Duke Math. J.}, 119(2):221--260, 2003.
\newblock \url{https://doi.org/10.1215/S0012-7094-03-11922-5}.

\bibitem[Las82]{Lascoux1982}
Alain Lascoux.
\newblock Classes de {C}hern des vari{\'e}t{\'e}s de drapeaux.
\newblock {\em C. R. Acad. Sci. Paris S{\'e}r. I Math.}, 295:393--398, 1982.

\bibitem[Liu15]{Liu2015}
Ricky~Ini Liu.
\newblock Positive expressions for skew divided difference operators.
\newblock {\em J. Algebraic Combin.}, 42(3):861--874, 2015.
\newblock \url{https://doi.org/10.1007/s10801-015-0606-1}.

\bibitem[LRS06]{LRS2006}
Cristian Lenart, Shawn Robinson, and Frank Sottile.
\newblock Grothendieck polynomials via permutation patterns and chains in the {B}ruhat order.
\newblock {\em Amer. J. Math.}, 128(4):805--848, 2006.
\newblock \url{https://doi.org/10.1353/ajm.2006.0034}.

\bibitem[LS82]{LS1982}
Alain Lascoux and Marcel-Paul Sch{\"u}tzenberger.
\newblock Polyn{\^o}mes de {S}chubert.
\newblock {\em C. R. Acad. Sci. Paris S{\'e}r. I Math.}, 294:447--450, 1982.

\bibitem[Mac91]{Macdonald1991}
Ian~G. Macdonald.
\newblock {\em Notes on {S}chubert Polynomials}, volume~6 of {\em Publications du LaCIM}.
\newblock Universit\'e du Qu\'ebec \`a Montr\'eal, Montr\'eal, 1991.

\bibitem[MS99]{MS1999}
Alexander~I. Molev and Bruce~E. Sagan.
\newblock A {L}ittlewood-{R}ichardson rule for factorial {S}chur functions.
\newblock {\em Trans. Amer. Math. Soc.}, 351(11):4429--4443, 1999.
\newblock \url{https://doi.org/10.1090/S0002-9947-99-02381-8}.

\bibitem[Sam24]{Samuel2024}
Matthew~J. Samuel.
\newblock A {M}olev-{S}agan type formula for double {S}chubert polynomials.
\newblock {\em J. Pure Appl. Algebra}, 228(7), 2024.
\newblock Paper No. 107636, \url{https://doi.org/10.1016/j.jpaa.2024.107636}.

\end{thebibliography}
\end{document}